\numberwithin{equation}{section}
\theoremstyle{plain}
\newtheorem{Thm}{Theorem}[section]
\newtheorem*{Thm*}{Theorem}
\newtheorem{Lem}[Thm]{Lemma}
\newtheorem{Prop}[Thm]{Proposition}
\theoremstyle{definition}
\newtheorem{Rem}[Thm]{Remark}
\newtheorem{?}[Thm]{Problem}
\newcommand{\Do}{\mathbf{D}_{0}}
\newcommand{\Dn}{\mathbf{D}_{\neq}}
\newcommand{\p}{\partial}
\newcommand{\R}{\mathbb{R}}
\newcommand{\e}{\varepsilon}
\newcommand{\Torus}{\mathbb{T}}
\newcommand{\mr}{\mathring}
\newcommand{\ac}{\acute}
\newcommand{\abs}[1]{\left\lvert#1\right\rvert}
\newcommand{\norm}[1]{\left\lVert#1\right\rVert}
\begin{document}
	
	\begin{titlepage}
		\title{Decay rate to the planar viscous shock wave for multi-dimensional scalar conservation laws}
		\author{Lingjun Liu$^{1}$}
		\author{Shu Wang$^{1}$}
		\author{Lingda Xu$^{2}$}
		
			
		\affil{\begin{flushleft}
				\footnotesize\qquad\quad $ ^1 $ College of Mathematics, Faculty of Science, Beijing University of Technology, Beijing 100124, 
				China.\ \ \ 
				E-mail: lingjunliu@bjut.edu.cn (L. Liu), wangshu@bjut.edu.cn (S. Wang).\\
				\vspace{0.07cm}
				\qquad\quad	 $ ^2 $ Department of Applied Mathematics, The Hong Kong Polytechnic University, Hong Kong, China.\ \ \ 
				E-mail: 
				lingda.xu@polyu.edu.hk (L. Xu).\\	
				\vspace{0.07cm}
		\end{flushleft}}
		\date{}
	\end{titlepage}
	\maketitle
	\begin{abstract}
		In this paper, we study the time-decay rate toward the planar viscous shock wave for multi-dimensional (m-d) scalar viscous conservation law. We first decompose the perturbation into zero and non-zero mode, and then introduce the anti-derivative of the zero mode. Though an $L^p$ estimate and the area inequality introduced in \cite{DHS2020}, we obtained the decay rate for planar shock wave for n-d scalar viscous conservation law for all $n\geq1$. The initial perturbations we studied are small,  i.e., $\|\Phi_0\|_{H^2}\bigcap\|\Phi_0\|_{L^p}\le \varepsilon$, where $\Phi_0$ is the anti-derivative of the zero mode of initial perturbation and $\varepsilon$ is a small constant, see \cref{antiderivative}. It is noted that there is no additional requirement on $\Phi_0$, i.e., $\Phi_0(x_1)$ only belongs to $H^2(\R)$. Thus, there are essential differences from previous results, in which the initial data is required to belong to some weighted Sobolev space, cf.\cite{Goo1989,KM1985}. Moreover, the exponential decay rate of the non-zero mode is also obtained.
	\end{abstract}
	
	{\bf Keywords.} {multi-dimensional scalar conservation law, planar shock wave, Cauchy problem, decay rate} \\
	
	\textbf{Mathematics Subject Classification.} 
	 35L65, 
	 35L67, 
	35K15
	\maketitle

	\section{Introduction}
	In this 
	 paper, we are concerned with the Cauchy problem of the m-d 
	 scalar conservation law as follows, 
\begin{align}\label{eq}
	\partial_t u(x, t)+\sum_{i=1}^n \partial_i\left(f_i(u(x, t))\right)=\Delta u(x, t), \quad t>0,\ \  x
	\in \Omega:= \mathbb{R}\times\Torus^{n-1},
\end{align}
	where the unknown function $u(x, t) \in \mathbb{R}$ is scalar, the space variable $x:=(x_1,x')=(x_1,x_2,\cdots,x_n), n \geq 2,$ $\Torus:=(\R/\mathbb{Z}),$ 
	$ \partial_i:=\frac{\partial}{\partial {x_i}}(i=1,2, \ldots, n), \triangle=\sum\limits_{i=1}^n \partial_i^2$, and $f_i(u)(i=1,2, \ldots, n)$ are smooth functions. We 
	 further assume that the flux $f_1$ is strictly convex, i.e., $$f_1^{\prime \prime}(u) \geq c_0>0$$ for some positive constant $c_0$ and all $u \in \mathbb{R}$.

	We consider the corresponding Riemann solutions, denoted as $u^R(x_1)$, of the Riemann problem
	\begin{align}
	\left\{\begin{array}{ll}\label{rp}
	u_{t}+f(u)_{x_1}=0,    \vspace{1ex}\\
	u(0, x_1)=u^R_{0}(x_1), 
	\end{array}\right.
	\end{align}
	where the initial data is given by
	\begin{align}\label{irp}
	u^R_0(x_1)=\left\{\begin{array}{ll}
	u_-,&x_1<0,\vspace{1ex}\\
	u_+,&x_1>0,
	\end{array}\right.\ \ \ \ \ \ \ \ \ \ \text{
	 $u_\pm$ are two constants.}
	\end{align}
The Riemann solutions contain two kinds of basic wave patterns: 
shock and rarefaction waves. In this paper, we are concerned with the shock wave case. Compared to \cref{rp}, the effect of viscosity in \eqref{eq} should be considered and the shock wave is smoothed as a smooth function, named viscous shock wave, which is a traveling wave solution to \cref{eq}. 
There are many important achievements, for example, Il'in-Oleinik \cite{IO1960} proved in the 1960s that the solution of (\ref{eq})  tends to the viscous shock wave with respect to time provided that $f(u)$ is strictly convex, i.e., $f''(u)>0$. By an additional assumption, the initial data belongs to a weighted Sobolev space, Kawashima-Matsumura \cite{KM1985} obtained the convergence rate, see also \cite{MN1994} for the case that $f(u)$ is not convex or concave. An interesting  $L^1$ stability theorem was shown 
 in \cite{FS1998}. 
Since the pioneering 
 works of Goodman \cite{Goodman1986} and Matsumura-Nishihara \cite{MN1986}, fruitful results on the asymptotic stability of traveling wave have been achieved for the systems of viscous conservation laws such as compressible Navier-Stokes system, see \cite{HM2009,KM1985,Liu1997,LZ2009,LZ2015,SX1993,Z} and the references therein. In particular,  Liu-Zeng \cite{LZ2015} obtained the pointwise estimates of viscous shock wave for conservation laws through the approximate Green function approach. 

Nevertheless, studying the decay rates toward the viscous shock wave through the basic energy method is also interesting. As far as we know,  the decay rate for scalar viscous conservation law \eqref{eq} by a weighted energy method was first obtained in \cite{KM1985}. 
Then there have been several works on the decay properties toward the viscous shock, such as 
 \cite{MN1994}, in which all of the decay rates in time depend on the decay rates of the initial data at the far fields, i.e., the initial data belongs to a weighted Sobolev space and weighted estimates are essential, 
  see 
  \cite{KM1985}. Without this kind of additional condition, recently, Huang-Xu \cite{HX} obtained the time-decay rate toward the viscous shock wave for 1-d scalar viscous conservation law with small initial perturbations. 

{In this paper, we shall extend the result in \cite{HX} to m-d cases. For m-d scalar conservation laws, there are also many beautiful results studying the large-time behavior of shock waves. For the results derived by spectrum analysis and Green function, we refer to \cite{Goo1999,Hoff2000,Shi2016}. 
We focus on the elementary energy method, \cite{Goo1989} obtained the stability by assuming initial perturbation belongs to some weighted Sobolev space. For the case of periodic perturbations, \cite{Y} studied the periodic perturbations and obtained the exponential decay rate. Note that by introducing a suitable ansatz, the initial data of the perturbation equation is zero in \cite{Y}. For the case of systems under periodic perturbations, we refer to  \cite{HuangXinXuYuan2023,HuangXuYuan2022,LiuWangXu2023}. We also refer to \cite{HLX} for the m-d scalar conservation law with non-strictly convex, which obtained the stability of the composite wave of planar rarefaction waves and contact waves.} 

The main purpose of this paper is to get the decay rate in time toward the {planar viscous shock wave} for the m-d viscous conservation law \eqref{eq} without additional conditions on the initial data as in \cite{Goo1999} and \cite{Y}.  In other words, more initial perturbations can be allowed in our initial data. We first decompose the perturbation into zero and non-zero modes. Then for the zero mode, we apply the anti-derivative technique and introduce $L^{p}$ energy estimates for $p\geq2$ and obtain the decay rate for $L^p$ norm of anti-derivative, $p>2$. Next, by the area inequality \cref{aii}, we obtained the decay rate for $L^2$ norm of perturbation. For non-zero mode, Poincar\'e's inequality is available, see \cref{3333}. We use this fact to carry out a non-trivial $L^p$ energy estimate and obtain the exponential decay rate of non-zero mode. Finally, we get the decay rate of perturbation by combining these two results.

Here we are ready to state our main result. Without loss of generality, we assume that the two constants satisfy $u_-<u_+$
. It is known that under the assumption of the so-called Lax's entropy condition, cf. \cite{M,MN1994},
    \begin{align}\label{oc}
    	h(u):=f(u)-f(u_{\pm})-s(u-u_\pm),
	 \ \ \ \ (u_-<u<u_+),
    \end{align}
    the Riemann solution to the Riemann problem (\ref{rp})-(\ref{irp}) consists of a single shock wave, cf. \cite{Smoller1983},
    \begin{align}
    u^s(x_1-st):=\left\{\begin{array}{ll}
    u_-,&x_1< st,\vspace{1ex}\\
    u_+,&x_1> st,
    \end{array}\right.
    \end{align}
    where $s$ is the shock speed and determined by the Rankine-Hugoniot condition
    \begin{align}\label{rh}
    	-s(u_+-u_-)+\left[f(u_+)-f(u_-)\right]=0.
    \end{align}
In this paper, we consider that 
\begin{align}
h'(u_-)=f'(u_-)-s>0,\ \ \ \ \ h'(u_+)=f'(u_+)-s<0.
\end{align}

The viscous version of shock wave (viscous shock wave) 
    \begin{align}
    	u=U(\xi),\ \ \ \ \ \xi=x_1-st, ~\lim\limits_{\xi\rightarrow\pm\infty}U(\xi)=u_\pm,
    \end{align}
    is a special solution of \eqref{eq}.
  The traveling wave $U(\xi)$ satisfies
     \begin{align}\label{a}
     \left\{\begin{array}{l}
     \left(-s U+f(U)-  U^{\prime}\right)^{\prime}=0, \vspace{1ex}\\
     {U(\pm \infty)=u_{\pm},}
     \end{array}\right.
     \end{align}
     where $':=\frac{d}{d\xi}$. We integrate \eqref{a} on $(-\infty,\xi)$ or $(\xi,+\infty)$ so that 
     \begin{align}\label{a1}
     -sU+f(U)-U'=-su_\pm+f(u_\pm),\ \ \ \xi\in\mathbb{R}.
     \end{align}
     Then the following global existence of $U(\xi)$ can be found in \cite{MN1994}. 
     \begin{Lem}\label{a2}
     Assume the Lax's entropy condition \eqref{oc} and Rankine-Hugoniot condition \eqref{rh} hold, then the equation \eqref{eq} admits a unique traveling wave solution $U(\xi)$ up to a constant shift, $\xi=x_1-st$, 
    , and satisfies
      $U'>0$.
     \end{Lem}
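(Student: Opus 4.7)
The plan is to reduce the problem to a one-dimensional autonomous phase-line analysis. Integrating \eqref{a} once and using the Rankine--Hugoniot condition \eqref{rh} to match the constant of integration obtained from the two asymptotic states $u_\pm$ yields the first-order autonomous ODE
\begin{equation*}
U'(\xi)=h(U),
\end{equation*}
with $h$ defined by \eqref{oc}; this is just another form of \eqref{a1}. The boundary conditions $U(\pm\infty)=u_\pm$ then correspond exactly to the two zeros of $h$, so the lemma reduces to producing a heteroclinic orbit of this scalar ODE connecting $u_-$ and $u_+$.

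Next I would analyze $h$ between the two equilibria. Strict convexity $h''=f''\geq c_0>0$ together with $h(u_\pm)=0$ forces $h$ to have a single strict sign on the open interval with endpoints $u_\pm$, and Lax's condition $h'(u_-)>0$, $h'(u_+)<0$ fixes this sign so that $u_-$ is a source and $u_+$ a sink for $U'=h(U)$; in particular both equilibria are hyperbolic.

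The solution is then constructed by separation of variables: for an arbitrary $U(0)$ strictly between $u_-$ and $u_+$, the relation
\begin{equation*}
\xi=\int_{U(0)}^{U(\xi)}\frac{dU}{h(U)}
\end{equation*}
implicitly defines a smooth strictly monotone solution on all of $\mathbb{R}$. Hyperbolicity of the endpoints gives $1/h(U)\sim c_\pm/(U-u_\pm)$ near $u_\pm$, so the improper integrals diverge logarithmically at both ends and $U(\xi)\to u_\pm$ only as $\xi\to\pm\infty$. The constant sign of $h$ on the interior then yields $U'>0$ throughout, and uniqueness up to a constant shift follows immediately from the autonomy of the ODE: any two solutions with identical range are time-translates of one another.

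The argument carries no serious obstacle; the one substantive check is the logarithmic divergence of $\int dU/h(U)$ at both endpoints, which is what prevents $U$ from reaching $u_\pm$ in finite $\xi$ and produces the full infinite-time heteroclinic. This divergence relies crucially on the hyperbolicity $h'(u_\pm)\neq 0$ guaranteed by Lax's condition. In a degenerate case where the entropy condition were only nonstrict, the integral would converge at an endpoint and no traveling wave connecting the two states would exist, which is exactly where one would expect the construction to fail.
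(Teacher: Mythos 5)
Your argument is correct and is the standard one: the paper itself gives no proof of Lemma \ref{a2}, simply citing \cite{MN1994}, and the quadrature/phase-line analysis you describe (integrate \eqref{a} once, observe $U'=h(U)$, use hyperbolicity of the two rest points to get logarithmic divergence of $\int dU/h(U)$ at the endpoints, and invoke autonomy for uniqueness up to shift) is exactly what that reference does. The only point worth flagging is a sign tension that you inherit from the paper's own conventions: with $f_1''\geq c_0>0$ and $h(u_-)=h(u_+)=0$, convexity forces $h<0$ on the open interval between $u_-$ and $u_+$, hence $h'(u_-)\le 0$ at the left endpoint; this is incompatible with the displayed Lax condition $h'(u_-)>0$, $h'(u_+)<0$ when one also insists $u_-<u_+$ as the paper does. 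In the consistent convex setting one has $u_+<u_-$ and the profile is monotone decreasing (or, equivalently, one keeps $u_-<u_+$ and concludes $U'<0$); your construction goes through verbatim after fixing the orientation, so this is a defect of the paper's statement rather than of your proof, but a careful write-up should either relabel the states or note that the sign of $U'$ is determined by the sign of $h$ on the interior, not asserted a priori.
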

     
     Let
    	$$\phi(x,t):=u(x,t)-U(\xi),$$ one has the following system
    \begin{align}\label{pe}
    	\partial_t \phi+\sum_{i=1}^n \partial_i\left[f_i(U+\phi)-f_i(U)\right]=\triangle \phi,
	\end{align}
	 with the initial data satisfies
	 \begin{align}
    	\phi_0(x):=u_0(x)-U(x_1)\in H^1(\Omega)\cap L^1(\Omega).\label{L1}
    \end{align}
      The anti-derivative of perturbation is denoted as
      \begin{align}\label{antiderivative}
      \Phi(x_1,t):=\int_{-\infty}^{x_1}\int_{\Torus^{n-1}}u(y_1,t)-U(y_1-st)dy_1,
      \end{align}
      and
         \begin{align}\label{antiderivative1}
       \Phi_0(x_1)=\Phi(x_1,0)\in H^2(\R).
      \end{align}
      Without loss of generality, we assume that $\Phi(\pm\infty,0)=0$ (otherwise we can replace $U(\xi)$ by $U(\xi+a)$ with a shift $a$ determined by the initial data $u_0(x)$). 
       
The main result is 
    \begin{Thm}\label{mt}
    Under the conditions \eqref{oc}, \eqref{L1}, and \eqref{antiderivative1}, 
    	there exists positive constants $\e_0,\ \delta_0$ such that if $\e:=\|\Phi_0(x_1)\|_{H^2}\leq \e_0$, $\delta:=\abs{u_--u_+}\leq \delta_0$, the Cauchy problem (\ref{eq}) has a unique global in time solution $u(x,t)$ satisfying
    	\begin{align}
    		{u-U \in C\left([0, \infty) ; H^{1}\right) \cap L^{2}\left([0, \infty) ; H^{2}\right).}
    	\end{align}
        Furthermore, for any $2\le p<\infty$, if ${\Phi_0(x_1)}\in{L^p(\R)},$  it holds that
    	\begin{align}\label{p}
	\|\Phi\|_{L^p}(t)\leq Cp^\frac14\e_0(1+t)^{-\frac{p-2}{4p}}, 
	\end{align}
	\begin{align}\label{p1}
    		\|u-U\|_{L^2}(t)\leq Cp^\frac18\e_0(1+t)^{-\frac{p-2}{8p}},
	\end{align}	
		\begin{align}\label{d2}
		\|u-U\|_{L^\infty}(t)\leq Cp^\frac{1}{6}\e_0
    		(1+t)^{-\frac{(p-2)(2p+1)}{4p(3p+2)}},
		\end{align}
		\begin{align}\label{d21}
		\norm{\phi-\int_{\Torus^2}\phi dx}_{L^\infty}\leq C\varepsilon_0 e^{-ct},
    	\end{align}
    where $C,c$ are some positive constants.
    \end{Thm}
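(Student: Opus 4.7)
The plan is to split $\phi = \bar{\phi} + \phi^*$ where $\bar{\phi}(x_1,t) := \int_{\Torus^{n-1}}\phi(x_1,x',t)\,dx'$ is the zero mode in the periodic directions and $\phi^*:=\phi-\bar{\phi}$ the non-zero mode. Averaging \cref{pe} over $\Torus^{n-1}$ annihilates every $\partial_i$ for $i\geq 2$ and leaves a purely one-dimensional conservation law for $\bar{\phi}$; integrating once in $x_1$ yields an equation of the form $\partial_t\Phi + f_1'(U)\partial_{x_1}\Phi - \partial_{x_1}^2\Phi = N[\phi]$ where $N$ collects quadratic remainders plus cross terms that couple in $\phi^*$. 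Before running the decay argument, I would first close a standard $H^2$ a priori estimate on $\phi$ via the anti-derivative technique (as in \cite{HX}), using smallness of $\e$ and $\delta$ to bootstrap local existence into a global solution with $u-U\in C([0,\infty);H^1)\cap L^2([0,\infty);H^2)$.

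The decay part splits into three stages. \textbf{Stage A} (non-zero mode): on $\phi^*$ the Poincaré inequality on $\Torus^{n-1}$ is available with a fixed spectral gap, so no anti-derivative is needed. Testing the equation for $\phi^*$ with $|\phi^*|^{p-2}\phi^*$ and using the smallness of $\delta$ to absorb the drift by $f_1'(U)$ gives
\[
\frac{1}{p}\frac{d}{dt}\|\phi^*\|_{L^p}^p + (c-C\e)\|\phi^*\|_{L^p}^p \leq 0,
\]
producing exponential decay of $\|\phi^*\|_{L^p}$ for every $p\geq 2$; standard Sobolev embedding then upgrades this to \cref{d21}. \textbf{Stage B} ($L^p$ decay of $\Phi$): multiplying the $\Phi$-equation by $|\Phi|^{p-2}\Phi$ and integrating by parts yields
\[
\frac{1}{p}\frac{d}{dt}\|\Phi\|_{L^p}^p + (p-1)\int|\Phi|^{p-2}|\partial_{x_1}\Phi|^2\,dx_1 = \frac{1}{p}\int f_1''(U)U'|\Phi|^p\,dx_1 + \int|\Phi|^{p-2}\Phi N\,dx_1.
\]
The term $\frac{1}{p}\int f_1''(U)U'|\Phi|^p$ is concentrated on the shock layer; the area inequality of \cite{DHS2020} (invoked here as \cref{aii}) bounds it by a small multiple of the dissipation plus $C(1+t)^{-1}\|\Phi\|_{L^p}^p$. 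Absorbing the good part and keeping careful track of the $p$-dependence of the constants, Grönwall gives the stated $\|\Phi\|_{L^p}(t)\leq Cp^{1/4}\e_0(1+t)^{-(p-2)/(4p)}$, with the $p^{1/4}$ arising from the square-root of the $p$ that appears multiplicatively after using $(p-1)|\Phi|^{p-2}|\partial_{x_1}\Phi|^2 = \frac{4(p-1)}{p^2}|\partial_{x_1}|\Phi|^{p/2}|^2$.

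\textbf{Stage C} ($L^2$ and $L^\infty$ decay of $u-U$): since $\bar{\phi}=\partial_{x_1}\Phi$, the Gagliardo-Nirenberg interpolation
\[
\|\partial_{x_1}\Phi\|_{L^2}^2 \lesssim \|\Phi\|_{L^p}^{\alpha}\|\partial_{x_1}^2\Phi\|_{L^2}^{2-\alpha}
\]
on $\R$, combined with the $H^2$ a priori bound that supplies $\|\partial_{x_1}^2\Phi\|_{L^2}\leq C\e_0$, yields \cref{p1}. The exponential decay from Stage A dominates the $\phi^*$ contribution, so the full $\|u-U\|_{L^2}$ obeys the same rate. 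For \cref{d2} I would use a one-dimensional Sobolev interpolation of the form $\|g\|_{L^\infty}^{3p+2}\lesssim \|g\|_{L^p}^{2p+1}\|\partial g\|_{L^2}^{p+1}$ applied to $\bar{\phi}$, optimise the resulting $p$-dependent bound, and again add the exponential contribution from $\phi^*$.

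The main obstacle is \textbf{Stage B}: tracking the $p$-dependence of every constant precisely enough to land the prefactor $p^{1/4}$ (this prefactor is the only reason one can optimise in $p$ at the end and obtain \cref{d2}), while simultaneously handling the nonlinear term $N[\phi]$ whose coupling to the non-zero mode is linear in $\bar{\phi}=\partial_{x_1}\Phi$ and therefore competes with the dissipation at exactly the same order; it is here that the exponential decay from Stage A is indispensable so that the coupling terms are absorbable after a short transient. The area inequality \cref{aii} is the second key ingredient, converting the $\int f_1''(U)U'|\Phi|^p$ obstruction into a dissipative bound plus a Grönwall-friendly remainder.
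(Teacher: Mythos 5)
Your overall architecture (zero/non-zero mode splitting, anti-derivative of the zero mode, exponential decay of the non-zero mode via Poincar\'e, $L^p$ estimate on $\Phi$, then interpolation) matches the paper, but Stage B contains a genuine gap. After testing the $\Phi$-equation with $|\Phi|^{p-2}\Phi$, the drift term produces $+\frac1p\int f_1''(U)\,U'\,|\Phi|^p\,dx_1$ on the right-hand side, which is a \emph{positive} shock-layer term with no smallness in $t$. You propose to control it with the area inequality (Lemma \ref{aii}), but that lemma is a real-variable statement about a function of time ($f'(t)\le C_0(1+t)^{-\alpha}$ plus $\int_0^t f\le C_1(1+t)^\beta$ implies $f\lesssim(1+t)^{(\beta-\alpha)/2}$); it cannot convert a spatial integral concentrated on the shock layer into ``a small multiple of the dissipation plus $C(1+t)^{-1}\|\Phi\|_{L^p}^p$.'' Without a further device this term only yields a Gr\"onwall inequality with a non-integrable coefficient and no decay. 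The paper's device is the Matsumura--Nishihara weight $w(u)$ with $(hw)''=-2$: testing with $w|\Phi|^{p-2}\Phi$ turns the troublesome term into $-\frac1p\int(hw)''|\Phi|^pU'=\frac2p\int|\Phi|^pU'\ge0$, i.e.\ a \emph{good} term, after which the time-weight $(1+\tau)^{\sigma}$ with $\sigma=\frac{p+2}{4}$ and the interpolation $\|\Phi\|_{L^p}^p\lesssim\|\Phi\|^{4p/(p+2)}\|\p_1(|\Phi|^{p/2})\|^{2(p-2)/(p+2)}$ give \eqref{p} with the $p^{1/4}$ prefactor. Your proposal omits the weight entirely.

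Stage C also fails as written. The interpolation $\|\p_1\Phi\|_{L^2}\lesssim\|\p_1^2\Phi\|_{L^2}^{\theta}\|\Phi\|_{L^p}^{1-\theta}$ forces $\theta=\frac{p+2}{3p+2}$, which is strictly less than $\tfrac12=\tfrac{j}{m}$ for every $p>2$, so it violates the admissibility constraint in the Gagliardo--Nirenberg inequality (Lemma \ref{ii}); highly oscillatory test functions show the inequality is false in that range, so you cannot deduce \eqref{p1} this way. This is precisely where the area inequality actually enters the paper: an energy estimate for $\mr\phi=\p_1\Phi$ gives $\frac{d}{dt}\|\mr\phi\|^2\le Cp^{1/2}\e_0^2(1+t)^{-\frac{p-2}{2p}}$, the global-existence bound gives $\int_0^\infty\|\mr\phi\|^2\,d\tau\le C\e_0^2$, and Lemma \ref{aii} then yields $\|\mr\phi\|^2\lesssim p^{1/4}\e_0^2(1+t)^{-\frac{p-2}{4p}}$, i.e.\ \eqref{p1}; a second application at the level of $\p_1\mr\phi$ plus Gagliardo--Nirenberg gives \eqref{d2}. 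Your Stage A is essentially the paper's argument (modulo the technical point that the $L^p$ exponential decay is bootstrapped from the $p=2$ case via Poincar\'e rather than obtained in one step), so the two items above are what you need to repair.
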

   \begin{Rem}
In \cite{KM1985} and \cite{MN1994}, the initial data $\Phi_0(x_1)$ belongs to a weighted Sobolev space, i.e., \begin{align}\label{add}
	\int_\R (1+x_1^2)^\frac\gamma2 \Phi^2_0(x_1)dx_1<+\infty, \ \ \ ~\gamma>0.
\end{align} 
Moreover, the decay rates obtained in \cite{KM1985} and \cite{MN1994} depend on $\gamma$. A similar requirement is needed in \cite{Goo1989}. {But} the
    additional condition \eqref{add} is removed in Theorem \ref{mt}.
     \end{Rem} 
     \begin{Rem}
     	The decay rate of $\|u-U\|_{L^2}$ is close to $(1+t)^{-\frac18}$ for 
	 large enough $p$. Similarly, the decay rate of $\|u-U\|_{L^\infty}$ is close to $(1+t)^{-\frac16}$ and it can be improved a little as the regularity of the initial value is higher.
     \end{Rem}


 
   { The rest of this paper will be arranged as follows. In section \ref{prelim}, we introduce some basic lemmas which play a key role in the proof of our main theorem. 
     A $L^p$ estimate on $\Phi$ is derived, and  Theorem \ref{mt} is proved in section 3. From \cite{IO1960,KM1985}, it is easy to know that $\|\Phi\|(t)$ is uniformly bounded by the initial data, but the $L^2$ norm $\|\Phi\|(t)$ may not tend to zero as $t\rightarrow\infty$. By a delicate $L^p$ estimate, the $L^p$ norm $(p>2)$ decays to zero with a rate of \eqref{p}. The desired decay rate \eqref{p1} and the rate \eqref{d2} are derived by making use of area inequality and Gagliardo-Nirenberg (G-N) inequality, respectively.}
   
   \ 
   
   \textbf{Notations.} 
   We denote $\|u\|_{L^p}$ by the norm of Sobolev space $L^p(\mathbb{R})$, especially $\|\cdot\|_{L^2}:=\|\cdot\|$, $C$ and $\bar{c}$ by the generic positive constants. 
   
  \section{Preliminaries}\label{prelim}
 In this section, we give some preliminaries that 
 will be used in the proof of the main theorem. First we show some properties of viscous shocks as follows.
 \begin{Lem}\label{shock}\cite{SX1993,Y}
 Assume that \eqref{oc} and \eqref{rh} hold, then the viscous shock $U(x_1)$ of the problem \eqref{a1} satisfies that,  \vspace{1ex}
 
 (i) $U'(x_1)>0$ for all $x_1\in\R$; \vspace{1ex}
 
 (ii) $\delta e^{\mp C\delta x_1}\le|U(x_1)-u_\pm|\le\delta e^{\mp c\delta x_1}$ for all $x_1\in\R$ with $\pm x_1\ge 0$; \vspace{1ex}
 
 (iii)$\delta^2 e^{- C\delta |x_1|}\le|U'(x_1)|\le \delta^2 e^{- c\delta |x_1|}$ for all $x_1\in\R$; \vspace{1ex}
 
 (iv) $|U''(x_1)|\le\delta|U'(x_1)|$ for all $x_1\in\R$, \vspace{1ex}
 
 where constant $C\ge1$ is independent of $\delta$, $x_1$ and $t$.
 
 \end{Lem}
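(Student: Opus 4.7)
The plan is to reduce the problem to the first-order autonomous ODE satisfied by the profile and then exploit linearization at the two rest points. Integrating \eqref{a1} and invoking the Rankine--Hugoniot relation \eqref{rh} (which forces $f(u_-)-su_-=f(u_+)-su_+$), one sees that $U$ satisfies
\begin{equation*}
U' = h(U), \qquad h(u) := f(u) - f(u_-) - s(u - u_-),
\end{equation*}
with $h(u_\pm)=0$ and $h$ of a fixed sign on the open interval between $u_-$ and $u_+$ by strict convexity of $f$ together with the Lax condition \eqref{oc}. Taylor expanding $f$ at $u_\pm$ and plugging into R--H gives $s = f'(u_\pm) \mp \tfrac12 f''(\eta_\pm)\delta$ for some intermediate $\eta_\pm$, so the uniform bound $c_0 \le f'' \le C$ yields
\begin{equation*}
c\,\delta \le |h'(u_\pm)| \le C\,\delta, \qquad \sup_{[u_-,u_+]} |h'| \le C\,\delta.
\end{equation*}
This is the source of the $\delta$-scaling in the exponents of (ii)--(iii) and of the $\delta$ prefactor in (iv). Statement (i) is then immediate: $h(U)$ does not vanish while $U$ lies strictly between $u_\pm$, so $U'$ keeps a fixed sign.

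For (ii), I would linearize near $u_+$ by setting $v = U - u_+$; the equation becomes $(\log|v|)' = h'(U) + O(v)$, and a Grönwall-type comparison sandwiches $v$ between two exponentials with rates in $[-C\delta, -c\delta]$ past a reference point $x_1^0$ at which $U(x_1^0)$ has reached a fixed fraction of the shock (e.g.\ $u_+ - \delta/2$, which can be arranged via the shift freedom in \cref{a2}); at $x_1^0$ one has $|v(x_1^0)| \asymp \delta$, supplying the prefactor. The case near $u_-$ is symmetric. Then (iii) follows by substituting (ii) into $U' = h(U) = h'(u_\pm)(U-u_\pm) + O((U-u_\pm)^2)$, so that the product of the two $\delta$-scales produces $\delta^2$; the center region, where $U$ is not close to either endpoint, is controlled by the elementary estimate $\sup_{[u_-,u_+]}|h| \le C\delta^2$ (from $h(u_\pm)=0$ and $c_0 \le h'' \le C$). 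Finally, differentiating $U'=h(U)$ gives $U'' = h'(U) U'$, and the bound $\sup|h'| \le C\delta$ from the first paragraph closes (iv).

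The main obstacle is matching the lower bounds in (ii)--(iii): the upper bound comes cleanly from linearization, but the lower bound requires ruling out faster-than-linear collapse of $U$ toward $u_\pm$. This is handled by a uniform Taylor-remainder estimate that keeps the perturbation $O(v)$ strictly smaller than $|h'(u_+)|$ whenever $|v|$ is below a small absolute fraction of $\delta$, after which integrating the exact relation $(\log|v|)' = h'(U) + O(v)$ pins the rate from below. Once the two-sided estimate for $v$ is in hand, (iii) and (iv) are algebraic consequences, so the entire lemma rests on this linearization argument carried out carefully on the appropriate half-lines.
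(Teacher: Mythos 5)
The paper offers no proof of this lemma: it is quoted directly from \cite{SX1993,Y}, so there is nothing in-paper to compare against, and your job is really to reconstruct the standard argument from those references. Your phase-plane proof does exactly that and is essentially correct: reducing \eqref{a1} to $U'=h(U)$, extracting $c\delta\le|h'(u_\pm)|\le C\delta$ and $\sup|h'|\le C\delta$ from the Rankine--Hugoniot relation \eqref{rh} together with $c_0\le f_1''\le C$ on the relevant compact interval, and then linearizing at the two rest points is the standard route to (i)--(iv). Three points should be made explicit in a full write-up. First, for the prefactors in (ii)--(iii) you need, in addition to $|v(x_1^0)|\asymp\delta$, that the reference point satisfies $x_1^0=O(\delta^{-1})$, so that $e^{-C\delta x_1}\gtrsim1$ on the intermediate segment where $|U-u_+|\asymp\delta$; this follows from the two-sided quadratic bound $|h(u)|\ge\tfrac{c_0}{2}\,|u-u_-|\,|u-u_+|$ (divided-difference form of Taylor's theorem), which shows the middle region is traversed in a length of order $\delta^{-1}$ --- your sketch uses only the upper bound $\sup|h|\le C\delta^2$ there, which controls (iii) but not the location of $x_1^0$. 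Second, your argument correctly yields the estimates only up to multiplicative constants, e.g.\ $c_1\delta e^{-C\delta x_1}\le|U-u_+|\le C_1\delta e^{-c\delta x_1}$ and $|U''|\le C\delta|U'|$; the literal statement printed in the lemma (with constant $1$ in the lower bound of (ii)) fails at $x_1=0$, so the constants cannot be dispensed with. Third, with $f_1''\ge c_0>0$ and the paper's normalization $u_-<u_+$, the function $h$ is convex and vanishes at both endpoints, hence $h<0$ on $(u_-,u_+)$ and the connecting profile would satisfy $U'<0$; your formulation that ``$U'$ keeps a fixed sign'' is the provable statement, and the sign asserted in (i) reflects an inconsistency between the paper's convention $u_-<u_+$ and its Lax condition $f_1'(u_-)>s>f_1'(u_+)$, not a gap in your argument.
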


 Here 
  we introduce  the famous 
  G-N inequality and the Area inequality, 
  respectively. 
 \begin{Lem}[G-N inequality \cite{KNN2004,HY}]\label{ii} Assume that $w\in L^q(\Omega)$ with $\nabla^m w\in L^r(\Omega)$, where $1\leq q,r\leq +\infty$ and $m\geq 1,$ and $w$ is periodic in the $x_i$ direction for $i=2,\cdots,n.$ Then there exists a decomposition $w(x)=\sum\limits_{k=0}^{n-1}w^{(k)}(x)$ such that each $w^{(k)}$ satisfies the $k+1$-dimensional G-N inequality, i.e.,
 	\begin{equation}\label{GN1}
 		\begin{aligned}
 			\|\nabla^{j}w^{(k)}\|_{L^p(\Omega)}\leq C\|\nabla^m w\|_{L^r(\Omega)}^{\theta_k}\|w\|_{L^q(\Omega)}^{1-\theta_k},
 		\end{aligned}
 	\end{equation}	
 	for any $0\leq j<m$ and $1\leq p\leq +\infty$ satisfying $\frac{1}{p}=\frac{j}{k+1}+(\frac{1}{r}-\frac{m}{k+1})\theta_k+\frac{1}{q}(1-\theta_k)$ and $\frac{j}{m}\leq \theta_k\leq 1.$ Hence, it holds that
 	\begin{equation}\label{GN2}
 		\begin{aligned}
 			\|\nabla^j w\|_{L^p(\Omega)}\leq C\sum_{k=0}^{n-1}\|\nabla^m w\|_{L^r(\Omega)}^{\theta_k}\|w\|_{L^q(\Omega)}^{1-\theta_k},\ \ \ (t\geq 0),
 		\end{aligned}
 	\end{equation}
 	where the constant $C>0$ is independent of $u$. Moreover, we get that for any $2\leq p<\infty$ and $1\leq q\leq p,$ it holds that	
 	\begin{equation}\label{GN3}
 		\begin{aligned}
 			\|w\|_{L^p(\Omega)}\leq C\sum_{k=0}^{n-1}\|\nabla(|w|^{\frac{p}{2}})\|_{L^2(\Omega)}^{\frac{2\gamma_k}{1+\gamma_kp}}\|w\|_{L^q(\Omega)}^{\frac{1}{1+\gamma_kp}},
 		\end{aligned}
 	\end{equation}
 	where $\gamma_k=\frac{k+1}{2}(\frac{1}{q}-\frac{1}{p})$ and the constant $C=C(p,q,n)>0$ is independent of $u$.

 \end{Lem}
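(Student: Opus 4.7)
The plan is to reduce the partially-periodic estimate on $\Omega = \mathbb{R} \times \mathbb{T}^{n-1}$ to classical Gagliardo--Nirenberg inequalities in lower dimensions via a Fourier-mode decomposition in the periodic variables $x' = (x_2, \ldots, x_n)$. Writing $w(x_1, x') = \sum_{\alpha \in \mathbb{Z}^{n-1}} \hat{w}_\alpha(x_1) e^{2 \pi i \alpha \cdot x'}$, I split $w = \sum_{k=0}^{n-1} w^{(k)}$ by grouping modes according to the number of non-zero components of $\alpha$: $w^{(0)}(x_1) = \hat{w}_0(x_1)$ is the pure zero mode (a function of $x_1$ alone), and for $k \geq 1$, $w^{(k)}$ collects those Fourier modes whose index $\alpha$ has exactly $k$ non-zero entries. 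Each such $w^{(k)}$ has zero mean in the $n-1-k$ periodic coordinates indexed by the vanishing entries, so its effective ambient dimension is $k + 1$.

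For $w^{(0)}$, the claim \eqref{GN1} reduces immediately to the classical one-dimensional Gagliardo--Nirenberg inequality on $\mathbb{R}$. For $k \geq 1$, I would treat each $k$-element subset $S \subset \{2, \ldots, n\}$ of non-vanishing directions separately: first apply the classical $(k+1)$-dimensional Gagliardo--Nirenberg inequality on the slab $\mathbb{R} \times \mathbb{T}^S$ slice-by-slice (with the coordinates outside $S$ frozen), then integrate over those outside coordinates and exploit the Poincar\'e inequality in those directions, where the corresponding piece has zero mean, to absorb lower-order norms into higher-order derivatives. The interpolation exponent $\theta_k$ is forced by the $(k+1)$-dimensional scaling $\tfrac{1}{p} = \tfrac{j}{k+1} + (\tfrac{1}{r} - \tfrac{m}{k+1})\theta_k + \tfrac{1}{q}(1-\theta_k)$. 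Summing over the $\binom{n-1}{k}$ index subsets yields \eqref{GN1}, and the triangle inequality $\|\nabla^j w\|_{L^p} \leq \sum_k \|\nabla^j w^{(k)}\|_{L^p}$ then gives \eqref{GN2}.

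To derive \eqref{GN3}, I substitute $v := |w|^{p/2}$ into \eqref{GN2} with $j = 0$, $m = 1$, $r = 2$, target exponent $2$, and interior exponent $2q/p$. The resulting scaling equation $\tfrac{1}{2} = (\tfrac{1}{2} - \tfrac{1}{k+1})\theta_k + \tfrac{p}{2q}(1 - \theta_k)$ solves to $\theta_k = \tfrac{\gamma_k p}{1 + \gamma_k p}$ with $\gamma_k = \tfrac{k+1}{2}(\tfrac{1}{q} - \tfrac{1}{p})$. Since $\|v\|_{L^2} = \|w\|_{L^p}^{p/2}$ and $\|v\|_{L^{2q/p}} = \|w\|_{L^q}^{p/2}$, raising the resulting inequality to the power $2/p$ and using the subadditivity $(\sum a_k)^{2/p} \leq \sum a_k^{2/p}$ (valid for $p \geq 2$ since $x \mapsto x^{2/p}$ is concave and vanishes at $0$) recovers \eqref{GN3} with the stated exponents $\tfrac{2 \gamma_k}{1 + \gamma_k p}$ and $\tfrac{1}{1 + \gamma_k p}$.

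The main obstacle I anticipate is the bookkeeping in the second step: one must combine Poincar\'e inequalities in varying directions with a classical G-N estimate in $(k+1)$ dimensions while verifying that the constants remain independent of which $k$-subset of periodic directions is non-trivial, so that summing over subsets does not blow up the constant $C$ beyond control. The endpoint cases $p = \infty$ or $r = \infty$ also require care, since Fourier series do not converge unconditionally in $L^\infty$; these cases should be handled either by first working with smooth compactly supported approximations and passing to the limit, or by invoking the analogous decomposition already established in \cite{KNN2004}, which this lemma essentially quotes and adapts.
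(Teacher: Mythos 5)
The paper offers no proof of this lemma --- it is quoted as a known result from \cite{KNN2004,HY} --- so the relevant comparison is with the argument in those references, and your sketch follows essentially that route: split $w$ into Fourier blocks according to which periodic directions carry non-trivial frequencies, apply the classical $(k+1)$-dimensional Gagliardo--Nirenberg inequality to each block with Poincar\'e supplying the control that compact directions would otherwise destroy, and deduce \eqref{GN3} from \eqref{GN2} by substituting $v=|w|^{p/2}$. Your exponent arithmetic for \eqref{GN3} checks out: with $j=0$, $m=1$, $r=2$ and interior exponent $2q/p$, the scaling relation gives $\theta_k=\gamma_k p/(1+\gamma_k p)$, and raising to the power $2/p$ together with $\|v\|_{L^2}=\|w\|_{L^p}^{p/2}$, $\|v\|_{L^{2q/p}}=\|w\|_{L^q}^{p/2}$ and the subadditivity of $x\mapsto x^{2/p}$ for $p\ge 2$ reproduces the stated exponents. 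One slip to correct in the first step: for a block $w_S$ indexed by a $k$-element set $S$ of non-vanishing frequency directions, the function is \emph{constant} in the $n-1-k$ periodic directions outside $S$ (which is precisely why its effective dimension is $k+1$) and has \emph{zero mean} in each direction of $S$; you assert the opposite, and consequently propose to apply Poincar\'e in the outside directions, where $w_S$ is constant and the inequality yields nothing. Poincar\'e must be invoked in the directions of $S$, where the mean vanishes. With the roles of the two sets of directions swapped back, and the routine observation that the Fourier projections defining the blocks are bounded on $L^r$ (they are compositions of averaging operators over circles), your argument is the standard one and is sound, including your caution about the endpoint exponents.
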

 We introduce the Area inequality established in \cite{DHS2020,HX}, i.e.,         
        \begin{Lem}[Area inequality
        ]\label{aii}
        	Assume that a Lipschitz continuous function $f(t)\ge 0$ satisfies 
        	\begin{eqnarray}\label{lemma1}
        	f^\prime(t)  \leq  C_0(1+t)^{-\alpha},
\end{eqnarray}
and
\begin{eqnarray}\label{lemma2}
        	\int_0^t f(s)ds  \le   C_1(1+t)^\beta\ln^\gamma (1+t), ~\gamma\ge 0,
        	\end{eqnarray}
        	for some positive constants $C_0$ and $C_1$, where $0\le\beta<\alpha$.	Then if $\alpha+\beta<2$, it holds that
        	\begin{equation}\label{l1}
        	f(t) \le 2\sqrt{C_0C_1}(1+t)^{\frac{\beta-\alpha}{ 2}}\ln^{\frac{\gamma}{2}}(1+t),~t>>1. 
	\end{equation}
	Moreover, if $\beta=\gamma=0$, $f(t)\in L^1[0,\infty)$ and $0<\alpha\le2$, then 
	\begin{equation}
	f(t)=o(t^{-\frac{\alpha}{2}}) \ \ \ \text{as} \ \ \ t>>1,
        	\end{equation}
	where the index $\frac{\alpha}{2}$ is optimal.
        \end{Lem}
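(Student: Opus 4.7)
The plan is an elementary geometric ``triangle'' argument bounding $f(t_0)$ from above using the two hypotheses in tandem. Fix a time $t_0\gg 1$ and set $M:=f(t_0)$. The derivative bound \eqref{lemma1} gives, for any $t\in(0,t_0)$,
\begin{equation*}
f(t)\ge f(t_0)-\int_t^{t_0}f'(s)\,ds\ge M-C_0(1+t)^{-\alpha}(t_0-t).
\end{equation*}
Choosing a width $\tau$ of order $M(1+t_0)^\alpha/C_0$ (and requiring $\tau\le t_0/2$, to be verified a posteriori) forces $f(t)\ge M/2$ throughout $[t_0-\tau,t_0]$, since on that interval $(1+t)^{-\alpha}$ is comparable to $(1+t_0)^{-\alpha}$ up to a factor of $2^\alpha$.

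Next I would insert this pointwise lower bound into the area hypothesis \eqref{lemma2}:
\begin{equation*}
\frac{M\tau}{2}\le \int_{t_0-\tau}^{t_0}f(s)\,ds\le\int_0^{t_0}f(s)\,ds\le C_1(1+t_0)^\beta\ln^\gamma(1+t_0).
\end{equation*}
Substituting the choice of $\tau$ yields $M^2(1+t_0)^\alpha\le 4C_0C_1(1+t_0)^\beta\ln^\gamma(1+t_0)$ up to an absolute multiplicative factor, and taking the square root delivers
\begin{equation*}
f(t_0)\le 2\sqrt{C_0C_1}\,(1+t_0)^{(\beta-\alpha)/2}\ln^{\gamma/2}(1+t_0),
\end{equation*}
which is \eqref{l1}.

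For the sharper statement when $\beta=\gamma=0$ and $f\in L^1[0,\infty)$, I would run the same triangle argument on the shrinking interval $[t_0/2,t_0]$ and replace the total area by the vanishing tail $\eta(t_0):=\int_{t_0/2}^{\infty}f(s)\,ds\to 0$ as $t_0\to\infty$. The same calculation produces $f(t_0)^2\le 8C_0(1+t_0)^{-\alpha}\eta(t_0)$, giving the $o(t^{-\alpha/2})$ decay. Optimality of the exponent $\alpha/2$ is exhibited by an explicit example of narrow triangular bumps placed at times $t_n\to\infty$ with heights of order $t_n^{-\alpha/2}$ and base widths tuned so that the total area is finite while \eqref{lemma1} still holds.

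The main subtlety is the a posteriori verification of $\tau\le t_0/2$, which amounts to the bound $M\le C(1+t_0)^{1-\alpha}$ for large $t_0$. I would close this by a simple bootstrap: a crude integration of \eqref{lemma1} from $0$ to $t_0$ produces an at most polynomial a priori growth of $f$, which is enough to seed the triangle argument in the regime $t_0\gg 1$; the assumption $\alpha+\beta<2$ is exactly what ensures that the self-improving exponent $(\beta-\alpha)/2$ is strictly less than $1-\alpha$, so that the bootstrap closes and the triangle always fits inside $[0,t_0]$.
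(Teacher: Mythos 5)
The paper does not actually prove this lemma --- it is quoted verbatim from \cite{DHS2020,HX} --- so there is no internal proof to compare against; your triangle/area argument is the standard one used in those references, and its skeleton is sound. However, there is a concrete gap in the one place you yourself flag as the main subtlety: the verification that the triangle fits, $\tau\le t_0/2$. You propose to seed a bootstrap with the crude bound obtained by integrating \eqref{lemma1} from $0$ to $t_0$. That seed does not launch for any $\alpha>0$: integration gives $f(t_0)\lesssim (1+t_0)^{1-\alpha}$ for $\alpha<1$ (resp.\ $\lesssim\ln(1+t_0)$ for $\alpha=1$, $\lesssim 1$ for $\alpha>1$), hence $\tau\sim f(t_0)(1+t_0)^{\alpha}/C_0$ is of order $(1+t_0)$, $(1+t_0)\ln(1+t_0)$, or $(1+t_0)^{\alpha}$ respectively --- never $\le t_0/2$. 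The standard repair is not a bootstrap but a dichotomy at each fixed $t_0$: if the required width exceeds $t_0/2$, then by the same derivative bound $f\ge M/2$ on all of $[t_0/2,t_0]$, so \eqref{lemma2} gives $Mt_0/4\le C_1(1+t_0)^{\beta}\ln^{\gamma}(1+t_0)$, i.e.\ $M\lesssim(1+t_0)^{\beta-1}\ln^{\gamma}(1+t_0)$, which is \emph{stronger} than the target precisely because $\beta-1<(\beta-\alpha)/2$ is equivalent to $\alpha+\beta<2$. With that case disposed of, your Case~A computation goes through and, done without throwing away the factor $2^{\alpha}$ (use $\int_s^{t_0}(1+r)^{-\alpha}dr\le(1+s)^{-\alpha}(t_0-s)$ with $s\ge t_0-\tau$ and $\tau=o(t_0)$), it yields the asymptotic constant $\sqrt{2C_0C_1}<2\sqrt{C_0C_1}$, consistent with \eqref{l1} for $t\gg1$.

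A second, smaller point concerns the optimality example. Bumps of height $h_n\sim t_n^{-\alpha/2}$ cannot have their widths ``tuned'' freely: the constraint $f'\le C_0(1+t)^{-\alpha}$ forces each bump to have base at least $h_n t_n^{\alpha}/C_0$ on its rising side, hence area at least $h_n^2t_n^{\alpha}/(2C_0)\sim 1/(2C_0)$, so infinitely many such bumps violate $f\in L^1$. The correct witnesses of optimality take heights $\epsilon_n t_n^{-\alpha/2}$ with $\epsilon_n\to0$ arbitrarily slowly and $\sum\epsilon_n^2<\infty$, showing the exponent $\alpha/2$ cannot be improved even though each individual $f$ is $o(t^{-\alpha/2})$. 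The tail argument you give for the $o(t^{-\alpha/2})$ statement itself (replacing the total area by $\int_{t_0/2}^{\infty}f$) is correct.
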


    \section{Proof of Theorem \ref{mt}}\label{apri}
{ This section is devoted to proving 
 theorem \ref{mt}, the proof is based on the anti-derivative technique and $L^p$ method.}  
  
\subsection{The decomposition for $\phi$}
{To define the antiderivative of the multi-dimensional perturbation $\phi(x,t)$}, we 
 decompose the perturbation $\phi(x,t)$ into the principal and transversal parts. We set $\int_{\mathbb{T}^{n-1}}1dx'=1, 
$ then we can define the 
 decomposition $\mathbf{D}_0$ and $\mathbf{D}_{\neq}$ as follows,
\begin{equation}\label{def-decom}
	\begin{aligned}
		\mathbf{D}_{0}f:= \mr{f}:=\int_{\mathbb{T}^{n-1}}f dx',\ \ \ \ \mathbf{D}_{\neq}f:=\ac{f}:=f-\mr{f},
	\end{aligned}
\end{equation}
for an arbitrary function $f$ which is integrable on $\mathbb{T}^{n-1}$.  There are the following propositions of $\mathbf{D}_0$ and $\mathbf{D}_{\neq}$ hold for any integrable function $f$.
\begin{Prop}\label{prop-decom}\cite{HouLiuWangXu2023}
	For the projections $\Do$ and $\Dn$ defined in \cref{def-decom}, the following holds,
	
	i) $\Do\Dn f=\Dn\Do f=0$;
	
	ii) For any non-linear function $F$, one has
	\begin{align}
		\Do F(U)-F(\Do U)=O(1)F''({\Do}U)\Do\big((\Dn U)^2\big);
	\end{align}
	
	iii) $\|f\|^2_{L^2(\Omega_\e)}=\|\Do f\|^2_{L^2(\R)}+\|\Dn f\|^2_{L^2(\Omega_\e)}.$
\end{Prop}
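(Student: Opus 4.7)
The plan is to verify all three properties by direct computation from the definitions in \cref{def-decom}, using the single arithmetic fact $\int_{\Torus^{n-1}} 1\,dx' = 1$ as the backbone throughout.

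For part (i), I would first observe that $\Do f = \mr{f}$ is, by construction, a function of $x_1$ alone, so it pulls through the transverse average as a constant:
\begin{equation*}
\Do(\Do f)(x_1) = \mr{f}(x_1)\int_{\Torus^{n-1}} 1\,dx' = \mr{f}(x_1) = \Do f(x_1),
\end{equation*}
i.e. $\Do$ is idempotent. Substituting the definition $\Dn f = f - \mr{f}$ then gives $\Do \Dn f = \Do f - \Do(\mr{f}) = \mr{f} - \mr{f} = 0$, and symmetrically $\Dn \Do f = \mr{f} - \mr{(\mr{f})} = \mr{f}-\mr{f}=0$.

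For part (ii), the strategy is a second-order Taylor expansion of $F(U)$ about the $x'$-independent point $\Do U = \mr{U}$. Writing $U = \mr{U} + \ac{U}$, Taylor's theorem with integral remainder yields
\begin{equation*}
F(U) = F(\mr{U}) + F'(\mr{U})\, \ac{U} + \tfrac12 F''\bigl(\mr{U} + \theta\, \ac{U}\bigr)\, (\ac{U})^2
\end{equation*}
for some $\theta(x) \in [0,1]$. Applying $\Do$ and pulling the $x'$-independent factors $F(\mr{U})$, $F'(\mr{U})$ out of the transverse integral, the first term becomes $F(\Do U)$; the linear term vanishes because $\Do \ac{U} = \Do \Dn U = 0$ by part (i); and the quadratic remainder produces precisely $F''(\Do U)\, \Do((\Dn U)^2)$ up to a bounded multiplicative factor coming from the smooth dependence of $F''$ between $\mr{U}$ and $\mr{U}+\ac{U}$, which is absorbed into the $O(1)$.

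For part (iii), I would expand $\|f\|^2_{L^2(\Omega)} = \int_\R \int_{\Torus^{n-1}} (\mr{f} + \ac{f})^2\, dx'\,dx_1$ and check orthogonality of the two pieces. The cross term is $2\int_\R \mr{f}(x_1)\, \Do(\ac{f})(x_1)\,dx_1$, which vanishes by part (i); the pure $\mr{f}^2$ term reduces to $\|\mr{f}\|^2_{L^2(\R)}$ since $\mr{f}$ is $x'$-independent and $\int_{\Torus^{n-1}} 1\,dx' = 1$; the remaining term is $\|\ac{f}\|^2_{L^2(\Omega)}$ by definition. There is no real obstacle here: the entire proposition is a bookkeeping lemma expressing that $\Do$ and $\Dn$ form an orthogonal pair of $L^2$ projections with respect to the transverse variable. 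The only point to be careful about is consistently treating $\Do U$ and its derivatives as constants in the $x'$-integration, especially when commuting them with $\Do$ in part (ii).
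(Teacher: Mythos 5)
Your proof is correct and is the standard direct verification from the definitions; the paper itself gives no proof of this proposition (it is quoted from \cite{HouLiuWangXu2023}), so there is no alternative argument to compare against. The one point worth flagging is in part (ii): rewriting the Taylor remainder $\tfrac12 F''(\mr{U}+\theta\,\ac{U})(\ac{U})^2$ in the stated form $O(1)F''(\Do U)\Do\big((\Dn U)^2\big)$ implicitly requires that $F''$ at the intermediate point be comparable to $F''(\Do U)$, which is justified here because $\ac{U}$ is small and the relevant flux satisfies $f_1''\ge c_0>0$, but should be said explicitly rather than attributed only to ``smooth dependence.''
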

Applying $\Do$ to \eqref{pe}, we decompose the perturbation $\phi$ into the zero mode $\mr{\phi}$ and the non-zero mode $\ac{\phi}$, ($\phi=\mr{\phi}+\ac{\phi}$),
\begin{equation}\label{zm0}
\partial_{t}{\mr{\phi}}+\p_1\Big\{\Do\big(f_1( U+\phi)-f_1( U)\big)\Big\}
=\partial_{1}^2{\mr{\phi}},
\end{equation}
\begin{align}
	\left\{\begin{array}{ll}\label{nzmi}
	\p_t\ac{\phi}+\sum\limits_{i=1}^{n}\partial_{i}\bigg\{f_i( U+\phi)-f_i( U)-\Do\big(f_i( U+\phi)-f_i( U)\big)\bigg\}=\triangle\ac\phi,\vspace{1ex}\\
 \ac\phi(x,0)=0. 
	\end{array}\right.
	\end{align}
By the definition of antiderivative to $\mr{\phi}$ in \eqref{antiderivative}, we obtain 
\begin{equation}\label{zm}
	\partial_{t}{\Phi}+f_1'(U)\p_1\Phi=\partial_{1}^2{\Phi}+f_1'(U)\p_1\Phi-\Do\big(f_1( U+\phi)-f_1( U)\big),
\end{equation}
where $U$ is independent of the transverse variable $x'$. 

Theorem \ref{mt} can be derived by the following global existence theorem immediately.
   \begin{Thm}[Global existence \cite{MN1994}]\label{GE} Under the conditions 
   of \ref{mt}, then the Cauchy problem \eqref{zm} with \eqref{antiderivative1} admits a unique global in time solution $\Phi(x_1,t)$ satisfying
  \begin{align}\label{GE1}
  \|\Phi(x_1,t)\|_{H^2}^2(t)+\int_0^t\|\Phi(x_1,t)\|_{H^3}^2(\tau)d\tau\le C\e_0^{2}.
  \end{align}
  \end{Thm}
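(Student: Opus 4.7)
The plan is to establish global existence for \eqref{zm} by combining standard local well-posedness in $H^2(\R)$ with uniform a priori $H^2$ estimates, then extending via a continuation argument. Taylor expanding the nonlinear term $\Do(f_1(U+\phi)-f_1(U))$ and cancelling the stray $f_1'(U)\p_1\Phi$ on the right of \eqref{zm} rewrites the equation as
\[
\p_t\Phi+f_1'(U)\p_1\Phi-\p_1^2\Phi=-\tfrac{1}{2}f_1''(U)(\p_1\Phi)^2-\tfrac{1}{2}f_1''(U)\Do(\ac{\phi}^2)+O(|\phi|^3),
\]
so \eqref{zm} is a semilinear convection--diffusion equation whose linear coefficient is the shock profile $f_1'(U)$. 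Since local existence in $H^2(\R)$ is classical, the substance of the proof is the a priori bound \eqref{GE1}, carried out under the bootstrap assumption $\sup_{[0,T]}\|\Phi(\cdot,t)\|_{H^2}\le\nu$ for $\nu$ small, to be absorbed at the end.

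The basic $L^2$ estimate exploits the monotonicity of the viscous shock. Testing the equation with $\Phi$ and integrating by parts on the convective term yields the identity
\[
\int_{\R}f_1'(U)\,\p_1\Phi\cdot\Phi\,dx_1=-\tfrac{1}{2}\int_{\R}f_1''(U)\,U'\,\Phi^2\,dx_1,
\]
which is a genuine dissipation of size $c_0\int_\R U'\Phi^2\,dx_1$ thanks to $f_1''\ge c_0>0$ and $U'>0$ from \cref{shock}. The parabolic term provides $\|\p_1\Phi\|^2$, while the self-source $(\p_1\Phi)^2$ and the cubic remainders are absorbed into this dissipation via $\|\p_1\Phi\|_{L^\infty}\le C\|\Phi\|_{H^2}\le C\nu$. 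The transversal source $\Do(\ac{\phi}^2)$ is controlled, after Cauchy--Schwarz in $x'$, by $\|\ac{\phi}\|_{L^4(\Omega)}^2$ and is handled by the exponential-in-time decay $\|\ac{\phi}(\cdot,t)\|_{H^2}\le C\e_0 e^{-ct}$, which is derived independently from \eqref{nzmi} using Poincar\'e's inequality on $\Torus^{n-1}$ (available precisely because $\ac{\phi}$ has zero mean in $x'$).

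Higher-order estimates are obtained by differentiating \eqref{zm} once and twice in $x_1$ and testing with $\p_1\Phi$ and $\p_1^2\Phi$; these produce dissipations $\|\p_1^2\Phi\|^2+\|\p_1^3\Phi\|^2$, while the commutators $\p_1^k[f_1'(U)\p_1\Phi]-f_1'(U)\p_1^{k+1}\Phi$ produce lower-order terms weighted by $U'$ (of order $\delta^2e^{-c\delta|x_1|}$ by \cref{shock}(iii)), absorbed using Cauchy--Schwarz and smallness of $\delta$. Summing the three orders and integrating in time yields
\[
\|\Phi(t)\|_{H^2}^2+c\int_0^t\|\p_1\Phi(\tau)\|_{H^2}^2\,d\tau\le C\e_0^2+C\int_0^t\|\ac{\phi}(\tau)\|_{H^2}^2\,d\tau\le C\e_0^2,
\]
which is \eqref{GE1} and simultaneously closes the bootstrap once $\e_0,\delta_0$ are small.

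The main obstacle will be the coupling between the zero and non-zero modes: the transversal quadratic $\Do(\ac{\phi}^2)$ cannot be absorbed into the dissipation for $\Phi$ alone, so the exponential-in-time decay of $\|\ac{\phi}\|_{H^2}$ must be proved first, independently of the $\Phi$-estimate, to avoid a circular argument. A secondary subtlety is that $\Phi$ carries no prescribed weight at spatial infinity and lies only in $L^2(\R)$; consequently nonlinear self-interactions of $\Phi$ must be bounded via $\|\p_1\Phi\|_{L^\infty}$ or the exponentially decaying factor $U'$, never through a naive $\|\Phi\|_{L^\infty}$ paired against $\|\Phi\|^2$.
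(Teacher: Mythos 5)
The paper does not actually prove \cref{GE}: the statement is quoted from \cite{MN1994} (with \cite{Goo1989,Y} implicitly covering the multi-dimensional coupling) and no argument is supplied. Your sketch reconstructs essentially the intended proof --- local existence in $H^2$, an unweighted energy estimate for the antiderivative closed by a bootstrap, exponential decay of the non-zero mode via Poincar\'e's inequality on $\Torus^{n-1}$, and absorption of the time-integrable coupling term $\Do(\ac{\phi}^2)$ --- so in spirit it is the same approach, and the architecture (both estimates closed under one joint smallness assumption, since the $\ac{\phi}$-equation also contains $\mr{\phi}\,\ac{\phi}$ terms and is therefore not literally independent of $\Phi$) is sound. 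One small refinement: the cubic self-interaction $\int f_1''(U)(\p_1\Phi)^2\Phi\,dx_1$ is best bounded by $\|\Phi\|_{L^\infty}\|\p_1\Phi\|^2\le C\nu\|\p_1\Phi\|^2$, which the dissipation absorbs; splitting off $\|\p_1\Phi\|_{L^\infty}$ instead leaves a factor $\|\Phi\|\,\|\p_1\Phi\|$ whose $\|\Phi\|^2$ part is not dissipated.

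Two points need genuine attention. First, the sign of your key ``dissipation'' is wrong as written: with the stated conventions $f_1''\ge c_0>0$ and $U'>0$, integration by parts gives $\int_\R f_1'(U)\p_1\Phi\cdot\Phi\,dx_1=-\tfrac12\int_\R f_1''(U)U'\Phi^2\,dx_1\le 0$, and since this term sits on the left of the energy identity it is a \emph{growth} term, not a dissipation of size $c_0\int U'\Phi^2$. (The paper's own conventions are already inconsistent here: for a strictly convex flux the Lax condition $f_1'(u_-)>s>f_1'(u_+)$ forces $u_+<u_-$ and a decreasing profile, in which case the term does have the good sign --- this is the classical Goodman/Matsumura--Nishihara observation. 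You should either fix the orientation of the shock or insert the weight $w(u)$ of \cref{1pp}, for which $(hw)''=-2$ makes the sign unambiguous.) Second, your final integrated estimate yields the dissipation $\int_0^t\|\p_1\Phi\|_{H^2}^2\,d\tau$, not $\int_0^t\|\Phi\|_{H^3}^2\,d\tau$ as \eqref{GE1} literally claims; the $L^2$ norm of $\Phi$ itself is not dissipated (only the degenerate quantity $\int U'\Phi^2$ is), and the paper itself notes that $\|\Phi\|(t)$ need not decay. What you obtain is the correct, provable version of \eqref{GE1}, and it is the version the rest of the paper actually uses (e.g.\ $\int_0^\infty\|\mr{\phi}\|^2\,d\tau\le C\e_0^2$ in \cref{p2}), but you should state explicitly that the $H^3$ under the time integral must be read as $\p_1\Phi\in L^2\left([0,\infty);H^2\right)$.
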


\subsection{$L^p$ estimate}
Based on the global existence in Theorem \ref{GE}, we shall establish a $L^p$ estimate for $\Phi(x_1,t)$ to obtain the decay rate.
\begin{Prop}[local existence]\label{local}
Under the assumptions of Theorem \ref{mt}, there exists constant $T_0>0$ such that the initial value problem \eqref{pe}-\eqref{L1} admits a unique smooth solution $\phi(x,t)$ on the time interval $[0,T_0]$. 
\end{Prop}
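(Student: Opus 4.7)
The plan is to prove \cref{local} by a standard contraction-mapping argument in a parabolic function space, using only the smoothness of the fluxes $f_i$ and the boundedness of $U$ and its derivatives from \cref{shock}. I would work in
\[
X_{T_0} := C\bigl([0,T_0]; H^1(\Omega)\bigr) \cap L^2\bigl([0,T_0]; H^2(\Omega)\bigr),
\]
and look for a fixed point of a solution map $\Psi$ in the closed ball $B_M := \{\phi \in X_{T_0} : \|\phi\|_{X_{T_0}} \leq M\}$, with $M$ taken comparable to $\|\phi_0\|_{H^1}$. For each $\tilde\phi \in B_M$, define $\Psi\tilde\phi := \phi$ as the unique solution of the linear heat problem
\begin{equation*}
\partial_t \phi - \Delta \phi = -\sum_{i=1}^n \partial_i\bigl[f_i(U+\tilde\phi) - f_i(U)\bigr], \qquad \phi|_{t=0} = \phi_0,
\end{equation*}
whose solvability in $X_{T_0}$, together with the a priori bound
\[
\|\phi\|_{X_{T_0}} \leq C\Bigl(\|\phi_0\|_{H^1} + \sum_{i=1}^n \bigl\|\partial_i[f_i(U+\tilde\phi) - f_i(U)]\bigr\|_{L^2(0,T_0;L^2)}\Bigr),
\]
follows from classical heat-kernel theory (Duhamel's formula plus $H^2$ energy estimates, or equivalently a Galerkin approximation scheme).

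The core step is to control the nonlinear source. Writing $f_i(U+\tilde\phi)-f_i(U) = \bigl(\int_0^1 f_i'(U+\tau\tilde\phi)\,d\tau\bigr)\tilde\phi$, using the chain rule together with the uniform bounds on $U$, $U'$, $U''$ from \cref{shock}, and invoking the cylindrical G--N inequality \cref{ii} to control $\|\tilde\phi\|_{L^\infty(\Omega)}$ by an appropriate combination of Sobolev norms on $\R\times\Torus^{n-1}$, one obtains an estimate of the shape
\[
\sum_{i=1}^n \bigl\|\partial_i[f_i(U+\tilde\phi)-f_i(U)]\bigr\|_{L^2_tL^2_x} \leq C(M)\bigl(T_0^{1/2}\|\tilde\phi\|_{C_tH^1} + \|\tilde\phi\|_{L^2_tH^2}\bigr).
\]
Choosing $M\sim\|\phi_0\|_{H^1}$ and then $T_0$ small forces $\Psi(B_M)\subset B_M$. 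The same bookkeeping applied to the difference $\Psi\tilde\phi_1-\Psi\tilde\phi_2$, whose source is Lipschitz in $\tilde\phi$ by the mean value theorem with constant depending on $M$ and $\|U\|_{W^{1,\infty}}$, yields contractivity with constant $\leq 1/2$ after a further shrinking of $T_0$. Banach's fixed point theorem then produces the unique solution $\phi\in X_{T_0}$.

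Finally, the smoothness of $\phi$ on $[0,T_0]$ follows by the standard parabolic bootstrap: the $C^\infty$-regularity of each $f_i$ combined with the $C^\infty$-smoothing of $\partial_t-\Delta$ at positive times successively upgrades the regularity of $\phi$. The only genuine technical point in the argument is the nonlinear source estimate, where one has to split $\partial_i[f_i(U+\tilde\phi)-f_i(U)]$ into terms such as $f_i''(U+\tilde\phi)\,U'\,\tilde\phi$ and $f_i'(U+\tilde\phi)\,\partial_i\tilde\phi$ and bound each product in $L^2_tL^2_x$ using \cref{shock} and the cylindrical embeddings supplied by \cref{ii}. No information from the Lax condition \eqref{oc}, the Rankine--Hugoniot relation \eqref{rh}, or the monotonicity $U'>0$ is actually invoked at this stage, so the argument is essentially decoupled from the shock-wave structure.
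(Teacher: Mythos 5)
The paper gives no proof of \cref{local} at all---immediately after stating it, the authors declare the local existence standard and omit the argument---so your contraction-mapping scheme in $C([0,T_0];H^1)\cap L^2([0,T_0];H^2)$ (linear heat solve, $L^2_tL^2_x$ source estimate, small-time contraction, parabolic bootstrap for smoothness) is precisely the standard route being alluded to, and its overall structure is sound; you are also right that \eqref{oc}, \eqref{rh} and $U'>0$ play no role here. The one step that is optimistic as written is the constant $C(M)$ in your source bound: for $n\geq 2$ one has $H^1(\R\times\Torus^{n-1})\not\hookrightarrow L^\infty$, and the $L^2_tH^2$ component only controls $\tilde\phi(t)$ pointwise for a.e.\ $t$, so membership in $B_M$ does not by itself bound $f_i'$ and $f_i''$ on the range of $U+\tilde\phi$; the routine repair is to truncate the fluxes outside a compact neighborhood of $[u_-,u_+]$ and remove the truncation a posteriori via the maximum principle for \eqref{eq}, or to run the fixed point at the $H^2$ level consistent with \eqref{antiderivative1}.
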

Note that it is standard to prove the above local existence of the solution $\phi(x,t)$ for Cauchy problem \eqref{pe}-\eqref{L1} in the time interval $[0,T_0]$, we omit this proof process for brevity. 
Now we show the a priori estimates for the non-zero mode $\ac{\phi}$ as follows. Before that, we give the a priori assumptions {for any $p\in[2,+\infty)$},
\begin{align}\label{apa}
\nu:=\sup_{t\in(0,T)}\Big\{\norm{\Phi}_{L^p}+ \|\ac\phi\|_{W^{1,p}}\Big\},   
\end{align}
where $\nu<\e$ is a small positive constant.

\begin{Prop}[a priori estimates for the non-zero mode $\ac{\phi}$]\label{pet1}
	{Assume that} $\ac{\phi}(x,t)$ is the local smooth solution, then for any $p\in [2,+\infty)$, 
	we have
	\begin{equation}\label{p3.2}
		\begin{aligned}
			&\|\ac{\phi}(\cdot,t)\|_{W^{1,p}({\Omega})}\leq C\e_0 e^{-\bar{c}t},\quad  \forall p\in [2,+\infty), \ \ t\in[0,T], 
		\end{aligned}
	\end{equation}	
	where positive constant {$T\le T_0$ is arbitrary.}
\end{Prop}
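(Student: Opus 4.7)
The strategy is an $L^p$ energy estimate applied to \eqref{nzmi}, exploiting the structural fact that the $\Dn$-projected flux already carries at least one factor of $\ac\phi$. Adding and subtracting $f_i(U+\mr\phi)$ and noting that $f_i(U+\mr\phi)-f_i(U)$ depends only on $x_1$ (hence is annihilated by $\Dn$) yields
\begin{equation*}
\Dn\bigl[f_i(U+\phi)-f_i(U)\bigr]=f_i'(U+\mr\phi)\,\ac\phi+\tfrac12\Dn\!\bigl[f_i''(\xi_i)\,\ac\phi^{\,2}\bigr].
\end{equation*}
Because the leading coefficient $f_i'(U+\mr\phi)$ is independent of $x_2,\dots,x_n$, and $\ac\phi$ has zero mean over $\Torus^{n-1}$, the Poincar\'e inequality (Lemma~\ref{3333}) will supply the coercivity needed to turn dissipation into exponential decay.

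\textbf{Step 1 ($L^p$ estimate on $\ac\phi$).} Test \eqref{nzmi} with $|\ac\phi|^{p-2}\ac\phi$ and integrate over $\Omega$; the viscous term yields the dissipation $(p-1)\int|\ac\phi|^{p-2}|\nabla\ac\phi|^2\,dx$. After integration by parts, the linear-in-$\ac\phi$ flux contribution becomes $-\tfrac{p-1}{p}\sum_i\int\partial_i f_i'(U+\mr\phi)\,|\ac\phi|^p\,dx$, which vanishes for $i\ge 2$ (coefficient independent of $x_i$) and for $i=1$ is bounded by $C(\delta+\|\partial_1\mr\phi\|_{L^\infty})\|\ac\phi\|_{L^p}^p=O(\delta+\e)\|\ac\phi\|_{L^p}^p$, using Lemma~\ref{shock} ($|U'|\le C\delta^2$) and the $H^2$ bound on $\Phi$ from Theorem~\ref{GE} combined with 1-D Sobolev. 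The quadratic remainder is pointwise $\lesssim|\ac\phi|^2$, so Young's inequality splits it into $\eta\int|\ac\phi|^{p-2}|\nabla\ac\phi|^2\,dx+C\eta^{-1}\int|\ac\phi|^{p+2}\,dx$, and the last integral is $\le\nu^2\|\ac\phi\|_{L^p}^p$ thanks to the a priori bound \eqref{apa} (which, via Sobolev embedding, controls $\|\ac\phi\|_{L^\infty}$ for $p$ large enough).

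\textbf{Step 2 (coercivity, decay, and the $W^{1,p}$ bound).} The Poincar\'e inequality in the transverse variables, available because $\int_{\Torus^{n-1}}\ac\phi\,dx'=0$, upgrades the remaining dissipation to coercive control of $\|\ac\phi\|_{L^p}^p$. Together with Step~1 and the smallness of $\delta,\e_0,\nu,\eta$, this produces
\begin{equation*}
\tfrac{d}{dt}\|\ac\phi\|_{L^p}^p+2p\bar c\,\|\ac\phi\|_{L^p}^p\le 0,
\end{equation*}
and Gronwall delivers $\|\ac\phi\|_{L^p}\le C\e_0 e^{-\bar c t}$. For the gradient bound, I would differentiate \eqref{nzmi} in each $x_j$ and rerun the $L^p$ estimate on $\partial_j\ac\phi$, which is still zero mean in $x'$ since $\Dn$ commutes with $\partial_j$. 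The extra commutator terms, involving $\partial_j f_i'(U+\mr\phi)$, $\partial_j\mr\phi$, and products with $\ac\phi$, are either linear in $\partial_j\ac\phi$ with coefficients of order $\delta+\e$ or else quadratic (absorbed by Young), and the residual $\|\ac\phi\|_{L^p}$ contributions are controlled by the bound just proved. Summing over $j$ yields the claimed $W^{1,p}$ exponential decay.

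\textbf{Main obstacle.} The subtle point is the $L^p$ analogue of Poincar\'e needed to close Step~2: although $\ac\phi$ has zero mean in $x'$, the function $|\ac\phi|^{p/2}$ does not, so the $L^2$-Poincar\'e applied to $|\ac\phi|^{p/2}$ does not directly dominate $\|\ac\phi\|_{L^p}^p$ by $\int|\nabla_{x'}(|\ac\phi|^{p/2})|^2\,dx$ when $p>2$. The workaround I envisage is to combine the $L^p$ Poincar\'e--Wirtinger inequality for the zero-mean function $\ac\phi$ with the a priori $L^\infty$ smallness of $\ac\phi$ (delivered by Sobolev embedding from \eqref{apa} for $p$ sufficiently large), thereby absorbing the $\Torus^{n-1}$-mean of $|\ac\phi|^{p/2}$ back into the weighted dissipation. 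This is precisely the ``non-trivial $L^p$ energy estimate'' referred to in the introduction, and it is the only ingredient beyond standard energy methods.
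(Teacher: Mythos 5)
Your overall skeleton coincides with the paper's: test \eqref{nzmi} with $|\ac{\phi}|^{p-2}\ac{\phi}$, exploit that the leading flux coefficient is independent of $x'$ so that only $U'$ (size $\delta^2$) survives the integration by parts, use the zero transverse mean of $\ac{\phi}$ for coercivity, and then differentiate in $x_j$ and repeat for the $W^{1,p}$ bound. However, the step you yourself flag as the main obstacle is where the argument breaks, and the workaround you propose does not close it. The dissipation produced by this test function is
\begin{equation*}
(p-1)\int_\Omega|\ac{\phi}|^{p-2}|\nabla\ac{\phi}|^2\,dx=\tfrac{4(p-1)}{p^2}\bigl\|\nabla\bigl(|\ac{\phi}|^{p/2}\bigr)\bigr\|_{L^2}^2,
\end{equation*}
and neither of your two tools dominates $\|\ac{\phi}\|_{L^p}^p$ by this quantity: the $L^2$-Poincar\'e applied to $|\ac{\phi}|^{p/2}$ fails because that function has no reason to have zero transverse mean (as you note), while the $L^p$ Poincar\'e--Wirtinger inequality for $\ac{\phi}$ controls $\|\ac{\phi}\|_{L^p}$ by $\|\nabla\ac{\phi}\|_{L^p}$, which is a \emph{different} (and in general larger) quantity than the weighted dissipation above; the weight $|\ac{\phi}|^{p-2}$ degenerates precisely where $\ac{\phi}$ is small, and multiplying by $\|\ac{\phi}\|_{L^\infty}^{p-2}$ bounds the dissipation \emph{from above} by $\|\nabla\ac{\phi}\|_{L^2}^2$, not from below. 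Consequently the closed inequality $\frac{d}{dt}\|\ac{\phi}\|_{L^p}^p+2p\bar c\|\ac{\phi}\|_{L^p}^p\le 0$ that you invoke for all $p$ in one shot is not justified.

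The paper closes this differently, by a bootstrap in $p$: the G--N inequality \eqref{GN3} gives $\|\ac{\phi}\|_{L^p}^p\le\nu\|\nabla(|\ac{\phi}|^{p/2})\|_{L^2}^2+C\|\ac{\phi}\|_{L^2}^p$, and the last term is reduced to $C\|\nabla\ac{\phi}\|_{L^2}^p$ by the ordinary $L^2$-Poincar\'e (which \emph{is} available for $\ac{\phi}$). For $p=2$ this is absorbed into the dissipation and Gronwall yields exponential decay; for $p>2$ the right-hand side is then an exponentially decaying \emph{source} $C\e_0e^{-\bar c t}$ by the already-proved $p=2$ case, and the same G--N/Poincar\'e step supplies the coercive term $\|\ac{\phi}\|_{L^p}^p$ on the left. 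You need this two-stage structure (Lemmas \ref{L4.1}--\ref{tde1} of the paper) to make Step~2 rigorous. A secondary, fixable issue: your bound on the $i=1$ linear term uses $\|\p_1\mr{\phi}\|_{L^\infty}$, which via one-dimensional Sobolev requires $\Phi\in H^3$ pointwise in time, whereas Theorem \ref{GE} only gives this in the time-integrated sense; the paper avoids it by keeping $f_1'(U)$ as the linear coefficient (so only $U'$ appears) and relegating all $\mr{\phi}$-dependence to the quadratic remainder, which is then handled by H\"older and the dissipation.
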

The proof of Proposition \ref{pet1} is divided into the following lemmas.
\begin{Lem}[the basic $L^p$ estimate for $\ac{\phi}, 2\leq p<+\infty$]\label{L4.1}
	Under the same assumptions of \cref{pet1}, it holds that
	\begin{equation}\label{l4.01}
		\begin{aligned}
			\frac{d}{dt}\left\|\ac{\phi}\right\|_{L^p}^p+\left\|\nabla\left(|\ac{\phi}|^{\frac{p}{2}}\right)\right\|_{L^2}^2 \leq C(\varepsilon_0+ \delta+\nu)\left\|\nabla\ac{\phi}\right\|_{L^2(\Omega)}^p.
		\end{aligned}
	\end{equation}		
\end{Lem}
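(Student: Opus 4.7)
The strategy is a standard $L^p$ energy estimate adapted to the non-zero mode equation \cref{nzmi}. I multiply \cref{nzmi} by $|\ac\phi|^{p-2}\ac\phi$ and integrate over $\Omega$. The diffusion term, after integration by parts, becomes $-(p-1)\int_\Omega |\ac\phi|^{p-2}|\nabla\ac\phi|^2\,dx=-\tfrac{4(p-1)}{p^2}\|\nabla(|\ac\phi|^{p/2})\|_{L^2}^2$, which delivers (up to a harmless $p$-dependent factor absorbed into $C$) the dissipation appearing on the left of \cref{l4.01}. It then remains to bound the flux contribution $(p-1)\sum_i\int_\Omega G_i\,|\ac\phi|^{p-2}\partial_i\ac\phi\,dx$, where $G_i:=f_i(U+\phi)-f_i(U)-\Do[f_i(U+\phi)-f_i(U)]$.

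Taylor expansion, combined with the fact that $U=U(x_1)$ is transverse-constant, produces the clean splitting $G_i=f_i'(U)\ac\phi+\Dn h_i(\phi)$ with $h_i(\phi):=f_i(U+\phi)-f_i(U)-f_i'(U)\phi=O(\phi^2)$. For the linear part, the identity $|\ac\phi|^{p-2}\ac\phi\,\partial_i\ac\phi=\tfrac{1}{p}\partial_i(|\ac\phi|^p)$ together with integration by parts kills the $i\ge 2$ contributions (since $\partial_i f_i'(U)=0$ on $\Torus^{n-1}$) and leaves $-\tfrac{p-1}{p}\int_\Omega f_1''(U)U'\,|\ac\phi|^p\,dx$. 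By \cref{shock}, $|U'|\le C\delta^2$, so using Poincar\'e on $\Torus^{n-1}$ (available because $\Do\ac\phi=0$) this term is controlled by a $C\delta$ factor times the desired quantity.

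For the nonlinear part, I write $\phi=\mr\phi+\ac\phi$ and expand $\phi^2=\mr\phi^2+2\mr\phi\ac\phi+\ac\phi^2$. Since $\Dn(\mr\phi^2)=0$, the integrand $\Dn h_i(\phi)$ is pointwise dominated at leading order by $(|\mr\phi|+|\ac\phi|)|\ac\phi|$. The factor $\|\mr\phi\|_{L^\infty}\le C\e_0$ follows from $\mr\phi=\partial_1\Phi$, the global bound \cref{GE1}, and one-dimensional Sobolev embedding, while $\|\ac\phi\|_{L^\infty}\le C\nu$ is obtained from the a priori assumption \cref{apa} together with \cref{ii}. Rewriting $|\ac\phi|^{p-2}\partial_i\ac\phi=\tfrac{2}{p}|\ac\phi|^{p/2-1}\mathrm{sgn}(\ac\phi)\,\partial_i(|\ac\phi|^{p/2})$ and applying H\"older to pair the resulting $\partial_i(|\ac\phi|^{p/2})$ with the dissipation, the nonlinear integral inherits the desired $\e_0+\nu$ factor.

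The main obstacle is converting the various intermediate $L^\infty$ and $L^p$ norms into the precise form $\|\nabla\ac\phi\|_{L^2}^p$ that appears on the right of \cref{l4.01}. This requires using the multi-dimensional G-N decomposition in \cref{ii} together with the transverse Poincar\'e inequality (valid because $\Do\ac\phi=0$) to trade powers of $\|\ac\phi\|_{L^\infty}$ and $\|\ac\phi\|_{L^p}$ for powers of $\|\nabla\ac\phi\|_{L^2}$, and keeping the $p$-dependent constants under control so that they can be absorbed into $C$.
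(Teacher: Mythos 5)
Your proposal follows essentially the same route as the paper: multiply \cref{nzmi} by $|\ac\phi|^{p-2}\ac\phi$, split the flux into the linear part $f_i'(U)\ac\phi$ (handled by integration by parts, with only the $i=1$ contribution surviving and controlled via $|U'|\lesssim\delta^2$) and the quadratic remainder (controlled by the smallness of $\|\mr\phi\|_{L^\infty}$ and $\|\ac\phi\|_{L^\infty}$ paired against the dissipation), and then convert $\|\ac\phi\|_{L^p}^p$ into $\|\nabla\ac\phi\|_{L^2}^p$ via the G--N inequality \cref{GN3} and the transverse Poincar\'e inequality. The argument is correct; your identification of the cross term as $\mr\phi\,\ac\phi$ is in fact the accurate form of the paper's expansion \cref{l4.14}.
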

\begin{proof}
	For any $p\in[2,+\infty)$, multiplying \eqref{nzmi}$_1$ by $|\ac{\phi}|^{p-2}\ac{\phi}$ and then integrating the resulting equation on $\Omega$,  we have
	\begin{equation}\label{l4.11}
		\begin{aligned}
			&\frac{1}{p}\p_t\norm{\ac{\phi}}_{L^p(\Omega)}^p+(p-1)\sum\limits_{i=1}^{n} \int_{\Omega}|\ac{\phi}|^{p-2}\p_i\ac{\phi}\p_i\ac{\phi}dx\\
			&=\sum\limits_{i=1}^{n}\int_{\Omega}\bigg\{f_i( U+\phi)-f_i( U)-\Do\big(f_i( U+\phi)-f_i( U)\big)\bigg\}\partial_{i}(|\ac{\phi}|^{p-2}\ac{\phi})dx.
		\end{aligned}
	\end{equation}
	As for the first term on the right-hand-side of \eqref{l4.11} satisfying, remember $\phi=\mr{\phi}+\ac{\phi},$ 
	\begin{equation}\label{l4.14}
		\begin{aligned}
			\big\{f_i( U+\phi)-f_i( U)-\Do\big(f_i( U+\phi)-f_i( U)\big)\big\}
			=\big(f_i'( U)\ac{\phi}\big)+O(1)\big(\ac{\phi}^2+\ac{\phi}\mr{\phi}^2\big).
		\end{aligned}
	\end{equation}
Moreover, one has
	\begin{equation}\label{l4.16}
		\begin{aligned}
			I_1:&=\int_{\Omega}\bigg(f_i'( U)\ac{\phi}\bigg)\partial_{i}(|\ac{\phi}|^{p-2}\ac{\phi})dx\\
			&=\int_{\Omega}\partial_{i}\bigg(\frac{p-1}{p}f_i'( U)|\ac{\phi}|^{p}\bigg)-\frac{p-1}{p}f^{''}_1( U)|\ac{\phi}|^{p}\partial_{{1}} Udx,\vspace{1ex}\\
			I_2:=&\int_{\Omega}(\ac{\phi}^2+\ac{\phi}\mr{\phi}^2)\partial_{i}(|\ac{\phi}|^{p-2}\ac{\phi})dx\leq O(\nu)\big(\left\|\nabla\big(|\ac{\phi}|^{\frac{p}{2}}\big)\right\|_{L^2}^2+\|\ac{\phi}\|_{L^p}^p\big).
		\end{aligned}
	\end{equation}
	By \cref{ii} and the fact that $\abs{\p_1U}<\delta^2$ in lemma \ref{shock}, one has
	\begin{align}\label{I4-15}
		\norm{\ac{\phi}}_{L^p}^p\leq C\|\nabla\big(|\ac{\phi}|^{\frac{p}{2}}\big)\|_{L^2(\Omega)}^{\frac{2\gamma_kp}{1+\gamma_kp}}\|\ac{\phi}\|_{L^2(\Omega)}^{\frac{p}{1+\gamma_kp}}
		\leq \nu \|\nabla\big(|\ac{\phi}|^{\frac{p}{2}}\big)\|_{L^2}^2+C\|\ac{\phi}\|_{L^2}^p.
	\end{align}Combining \eqref{l4.14}-\eqref{I4-15}, one has
	\begin{equation}\label{3333}
		\begin{aligned}
			\frac{d}{dt}\|\ac{\phi}\|_{L^p}^p+\|\nabla\big(|\ac{\phi}|^{\frac{p}{2}}\big)\|_{L^2}^2\leq C(\varepsilon_0+ \delta+\nu)\|\ac{\phi}\|_{L^2(\Omega)}^p\leq C(\varepsilon_0+ \delta+\nu)\|\nabla\ac{\phi}\|_{L^2(\Omega)}^p,
		\end{aligned}
	\end{equation}
	where we have used Poincar\'e's inequality since $\int_{\Omega}\ac{\phi}dx=0$. 
	\end{proof}
	
	Furthermore, for $p=2$, one can {directly calculate} 
	 the exponential decay rate. For $p>2$, one should use the obtained result of $p=2$ and \eqref{3333} 
	 to obtain the desired decay rate. The detailed mathematics analysis 
	 is described as follows.
	 \begin{Lem}[Time decay estimate for $\ac\phi$, $2\le p<+\infty$] \label{tde1}
	 \begin{align}\label{tde}
	 \|\ac\phi(\cdot,t)\|_{L^p(\Omega)}\le C\e_0 e^{-\bar ct},\ \ \ \forall p\in[2,+\infty).
	 \end{align}
	 \end{Lem}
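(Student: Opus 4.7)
The plan is to treat the base case $p=2$ by a direct energy--Poincar\'e--Gr\"onwall argument, and then bootstrap to every $p>2$ by interpolating the exponentially decaying $L^2$ norm against a uniform $L^\infty$ bound furnished by the a priori assumption \eqref{apa} combined with Sobolev embedding.

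\medskip

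\textbf{Base case $p=2$.} Specializing \eqref{l4.01} to $p=2$ yields
\[
\tfrac{d}{dt}\|\ac\phi\|_{L^2}^2 + \|\nabla\ac\phi\|_{L^2}^2 \le C(\varepsilon_0+\delta+\nu)\|\nabla\ac\phi\|_{L^2}^2.
\]
Since $\varepsilon_0+\delta+\nu$ is small, the right-hand side can be absorbed into the dissipation, giving $\tfrac{d}{dt}\|\ac\phi\|_{L^2}^2 + \tfrac12\|\nabla\ac\phi\|_{L^2}^2 \le 0$. Because $\int_{\mathbb{T}^{n-1}}\ac\phi(x_1,x')\,dx' = 0$ for every $x_1$, Poincar\'e's inequality on $\mathbb{T}^{n-1}$ (applied slicewise and integrated in $x_1$) gives $\|\ac\phi\|_{L^2(\Omega)} \le C\|\nabla_{x'}\ac\phi\|_{L^2(\Omega)} \le C\|\nabla\ac\phi\|_{L^2(\Omega)}$. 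Inserting this produces $\tfrac{d}{dt}\|\ac\phi\|_{L^2}^2 + \bar c\,\|\ac\phi\|_{L^2}^2 \le 0$, and Gr\"onwall's lemma delivers $\|\ac\phi(\cdot,t)\|_{L^2}\le C\varepsilon_0\,e^{-\bar c t}$.

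\medskip

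\textbf{Extension to $p>2$.} Pick any $q_0 > n$. The a priori bound \eqref{apa} applied with exponent $q_0$ gives $\|\ac\phi(\cdot,t)\|_{W^{1,q_0}} \le \nu \le \varepsilon_0$, and Sobolev embedding $W^{1,q_0}(\Omega)\hookrightarrow L^\infty(\Omega)$ (valid because $q_0>n$) converts this into the uniform-in-time bound $\|\ac\phi(\cdot,t)\|_{L^\infty} \le C\varepsilon_0$. The elementary $L^p$ interpolation
\[
\|\ac\phi\|_{L^p} \le \|\ac\phi\|_{L^2}^{2/p}\,\|\ac\phi\|_{L^\infty}^{1-2/p}
\]
combined with the base-case decay then yields $\|\ac\phi(\cdot,t)\|_{L^p}\le C\varepsilon_0\,e^{-\bar c t}$, where the constants $C$ and $\bar c$ depend on $p$ (the rate scaling like $1/p$).

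\medskip

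\textbf{Main obstacle.} The literal reading of the hint---feed the $L^2$ decay into the right-hand side of \eqref{3333} and integrate---only produces uniform boundedness of $\|\ac\phi\|_{L^p}^p$, not decay, because the dissipation $\|\nabla(|\ac\phi|^{p/2})\|_{L^2}^2$ does not bound $\|\ac\phi\|_{L^p}^p$ through a direct Poincar\'e inequality: $|\ac\phi|^{p/2}$ is pointwise nonnegative and has no zero-mean structure on $\mathbb{T}^{n-1}$, and although it vanishes wherever $\ac\phi$ does, a single-point Poincar\'e inequality fails on $\mathbb{T}^{n-1}$ for $n-1\ge 2$. The interpolation route above sidesteps this obstruction altogether by exploiting the $W^{1,q_0}$ piece of \eqref{apa}; the cost is a $p$-dependent decay rate, which is consistent with the generic constants stated in the lemma.
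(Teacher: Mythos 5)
Your argument is correct, and for $p=2$ it coincides with the paper's Case 1: specialize \eqref{l4.01}, absorb the small right-hand side into the dissipation, and use the transverse Poincar\'e inequality (your slicewise formulation, using $\int_{\Torus^{n-1}}\ac\phi\,dx'=0$ for each $x_1$, is in fact the precise justification of the step the paper states more loosely). For $p>2$, however, you take a genuinely different route. The paper stays inside the $L^p$ energy framework: it returns to \eqref{3333}, bounds its right-hand side by $C\e_0e^{-\bar ct}$ via the $p=2$ decay, and then adds $\|\ac\phi\|_{L^p}^p$ to the left-hand side by means of the Gagliardo--Nirenberg inequality \eqref{GN3} in the form \eqref{I4-15}, namely $\|\ac\phi\|_{L^p}^p\le \nu\|\nabla(|\ac\phi|^{p/2})\|_{L^2}^2+C\|\ac\phi\|_{L^2}^p$, after which Gr\"onwall concludes. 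This is exactly the substitute for the ``missing Poincar\'e inequality for $|\ac\phi|^{p/2}$'' that you identify as the main obstacle: your diagnosis that a naive Poincar\'e fails is right, but the literal route is not blocked, since Lemma \ref{ii} is tailored to close it. Your alternative --- interpolating $\|\ac\phi\|_{L^p}\le\|\ac\phi\|_{L^2}^{2/p}\|\ac\phi\|_{L^\infty}^{1-2/p}$ with the uniform $L^\infty$ bound obtained from the a priori assumption \eqref{apa} at an exponent $q_0>n$ and Sobolev embedding on $\R\times\Torus^{n-1}$ --- is more elementary (no anisotropic G--N decomposition needed) and legitimate within the bootstrap, since the paper itself invokes \eqref{apa} throughout; the trade-off is that you consume the full $W^{1,q_0}$ strength of the a priori assumption where the paper only needs the $L^2$ decay and the dissipation. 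Both routes yield \eqref{tde} with constants and a rate degrading like $1/p$, which is consistent with the generic constants in the statement.
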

	\begin{proof} 
	
	{\bf Case 1.} When $p=2$, from \eqref{l4.01} and making use of  Poincar\'e's inequality, 
	one obtains 
	 that
	\begin{align}\label{l12l}
	\frac{d}{dt}\|\ac{\phi}\|^2+\|\ac\phi\|^2+\|\nabla\big(|\ac{\phi}|\big)\|^2\leq C\e_0 e^{-\bar ct}.
	\end{align}
	Then we get \eqref{tde} for $p=2$ quickly.
	
	{\bf Case 2.} When $p>2$, by \eqref{3333} 
	and \eqref{l12l}, 
	one has
	\begin{equation}\label{l3333}
		\begin{aligned}
			\frac{d}{dt}\|\ac{\phi}\|_{L^p}^p+\|\nabla\big(|\ac{\phi}|^{\frac{p}{2}}\big)\|_{L^2}^2\leq C(\varepsilon+ \delta+\nu)\|\ac{\phi}\|_{L^2(\Omega)}^p\leq C\e_0 e^{-\bar ct}.
		\end{aligned}
	\end{equation}
	Then it holds that 
	\begin{align}
	\frac{d}{dt}\|\ac{\phi}\|_{L^p}^p+\|\ac\phi\|_{L^p}^p+\|\nabla\big(|\ac{\phi}|^{\frac{p}{2}}\big)\|^2\leq C\e_0 e^{-\bar ct}.
	\end{align}
	Thus we get \eqref{tde} for $p\in(2,+\infty).$

\end{proof}

\begin{Lem}[Time decay estimate for $\nabla\ac{\phi}, 2\leq p<+\infty$]\label{L4.20.}
	Under the same assumptions of \cref{pet1}, it holds that
	\begin{equation}\label{l4.001}
		\begin{aligned}
			\left\|\nabla\ac{\phi}(\cdot,t)\right\|_{L^p(\Omega)} \leq C\varepsilon_0 e^{-\bar ct}, \ \ \ \ \forall p\in[2,+\infty).
		\end{aligned}
	\end{equation}		
\end{Lem}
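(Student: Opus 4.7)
The plan is to mirror the proofs of \cref{L4.1} and \cref{tde1}, now at the level of the gradient. I would first differentiate the non-zero mode equation \eqref{nzmi}$_1$ with respect to $x_j$ for each $j=1,\ldots,n$, multiply the resulting equation by $|\p_j\ac\phi|^{p-2}\p_j\ac\phi$, and integrate over $\Omega$. Summing in $j$, the viscous term yields the dissipation $\sum_{j=1}^{n}\|\nabla(|\p_j\ac\phi|^{p/2})\|_{L^2}^2$.

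To estimate the flux terms, I would expand, as in \eqref{l4.14},
\[
f_i(U+\phi)-f_i(U)-\Do\!\left(f_i(U+\phi)-f_i(U)\right)=f_i'(U)\ac\phi+O(1)\big(\ac\phi^{2}+\ac\phi\,\mr\phi^{2}\big).
\]
After integration by parts and the sharp bounds $|U'|+|U''|\lesssim\delta^{2}$ from \cref{shock}, the linear piece contributes at most $C\delta\|\nabla\ac\phi\|_{L^{p}}^{p}+C\|\ac\phi\|_{L^{p}}^{p}$, while the quadratic remainders, via H\"older's inequality combined with the a priori smallness \eqref{apa} and the Sobolev bound $\|\mr\phi\|_{L^{\infty}}\lesssim \|\Phi\|_{H^{2}}\lesssim \varepsilon_{0}$, are bounded by $O(\nu+\varepsilon_{0})\|\nabla\ac\phi\|_{L^{p}}^{p}+O(\nu)\|\nabla(|\nabla\ac\phi|^{p/2})\|_{L^{2}}^{2}$, hence absorbable into the dissipation.

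The crucial observation is that each $\p_j\ac\phi$ still has zero transverse mean on $\mathbb{T}^{n-1}$: for $j\geq 2$ this is preserved by differentiating along a periodic direction of a zero-mean function, and for $j=1$ it follows since $\p_1$ commutes with integration in $x'$. Hence \cref{ii} applies to $\p_j\ac\phi$ with $q=2$ and yields
\[
\|\nabla\ac\phi\|_{L^{p}}^{p}\leq \nu\,\|\nabla(|\nabla\ac\phi|^{p/2})\|_{L^{2}}^{2}+C\|\nabla\ac\phi\|_{L^{2}}^{p},
\]
reducing the master inequality to
\[
\frac{d}{dt}\|\nabla\ac\phi\|_{L^{p}}^{p}+\|\nabla(|\nabla\ac\phi|^{p/2})\|_{L^{2}}^{2}\leq C(\varepsilon_{0}+\delta+\nu)\|\nabla\ac\phi\|_{L^{2}}^{p}+C\|\ac\phi\|_{L^{p}}^{p},
\]
whose forcing $\|\ac\phi\|_{L^{p}}^{p}$ is already known to decay like $C\varepsilon_{0}^{p}e^{-\bar c t}$ by \cref{tde1}.

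It then remains to close the $L^{2}$ estimate for $\nabla\ac\phi$. I would handle this as a base case by testing $\p_j$ of \eqref{nzmi}$_1$ against $\p_j\ac\phi$; the dissipation $\sum_{j}\|\nabla\p_j\ac\phi\|_{L^{2}}^{2}$ combined with Poincar\'e applied to each $\p_j\ac\phi$ (again using its zero $x'$-mean) yields $\frac{d}{dt}\|\nabla\ac\phi\|^{2}+c\|\nabla\ac\phi\|^{2}\leq C\varepsilon_{0}^{2}e^{-\bar c t}$, hence the exponential decay for $p=2$. Plugging this into the displayed inequality above and invoking Gronwall then delivers \eqref{l4.001} for all $p\in[2,\infty)$. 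The main subtlety is the mixed term $\ac\phi\,\mr\phi^{2}$: one might worry because $\mr\phi$ only decays polynomially through $\Phi$, but its $L^{\infty}$ smallness supplied by $\|\Phi\|_{H^{2}}\lesssim\varepsilon_{0}$ already allows absorption on the right-hand side, so no exponential bound on $\mr\phi$ is required.
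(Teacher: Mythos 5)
Your proposal follows essentially the same route as the paper: differentiate the non-zero-mode equation \eqref{nzmi}$_1$ in each $x_k$, test against $|\p_k\ac\phi|^{p-2}\p_k\ac\phi$, exploit the zero transverse mean of $\p_k\ac\phi$ to apply the G-N/Poincar\'e reduction, absorb the nonlinear terms by the a priori smallness, and bootstrap from the $p=2$ case exactly as in \cref{tde1}; the paper merely organizes the differentiated flux into the explicit groups $J_1,J_2,J_3$ (separating $k=1$ from $k\ge 2$) rather than differentiating the Taylor expansion, and your closing remark that every surviving term carrying $\p_1\mr\phi$ comes paired with an exponentially decaying $\ac\phi$ factor is precisely how the paper disposes of those terms via $\|\ac\phi\|_{L^{2p}}^p\|\p_1\mr\phi\|_{L^{2p}}^p$.
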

\begin{proof}
Taking the derivative on \eqref{nzmi}$_1$ with respect to $x_k,$ $k=1,2,\cdots,n$, multiplying the resulting equation by $|\p_k\ac\phi|^{p-2}\p_k\ac\phi$, 
  when $k=1,$ one has
\begin{align}\label{nabla1}
\begin{aligned}
			\frac{1}{p}\p_t&\norm{\p_1\ac{\phi}}_{L^p(\Omega)}^p+(p-1)\sum\limits_{i=1}^{n} \int_{\Omega}|\p_1\ac{\phi}|^{p-2}\p_{i1}\ac{\phi}\p_{i1}\ac{\phi}dx\\
			&=\sum\limits_{i=1}^{n}\int_{\Omega}\bigg\{\Big[f'_i( U)\p_1\ac\phi-\Do\big(f'_i( U)\p_1\ac\phi\big)\Big]
+\Big[\big(f'_i(U+\phi)-f'_i(U)\big)(\p_1 U+\p_1\ac\phi)\Big]\\
			&\qquad-\Do\Big[\big(f'_i(U+\phi)-f'_i(U)\big)(\p_1U+\p_1\ac\phi)\Big]\\
			&\qquad+\Big[f'_i( U+\phi)\p_1\mr\phi-\Do\big(f_i( U+\phi)\p_1\mr\phi\big)\Big]\bigg\}\partial_{i}\left(|\p_1\ac{\phi}|^{p-2}\p_1\ac{\phi}\right)dx:=
			J,
\end{aligned}
\end{align}
and for $2\le k\le n,$
\begin{align}
\begin{aligned}
\frac{1}{p}\p_t&\norm{\p_k\ac{\phi}}_{L^p(\Omega)}^p+(p-1)\sum\limits_{i=1}^{n} \int_{\Omega}|\p_k\ac{\phi}|^{p-2}\p_{ik}\ac{\phi}\p_{ik}\ac{\phi}dx=\sum\limits_{i=1}^{n}\int_{\Omega}\bigg\{\Big(f'_i( U)\p_k\ac\phi\Big)\\
			&+\Big[\big(f'_i(U+\phi)-f'_i(U)\big)(\p_k U+\p_k\ac\phi+\p_k\mr\phi)\Big]
			\bigg\}\partial_{i}\left(|\p_k\ac{\phi}|^{p-2}\p_k\ac{\phi}\right)dx.
			\end{aligned}
			\end{align}
Here we only estimate the case of $k=1$ since these two cases are similar and easier for $2\le k\le n$.	
		
The term $J$ can be divided into three terms as follows.
Similar to $I_1$ in \eqref{l4.16}, we have 
\begin{align}
\begin{aligned}
&J_1:=\int_{\Omega}\left(f'_i(U)\p_1\ac\phi\right)\p_i\left(|\p_1\ac\phi|^{p-2}\p_1\ac\phi\right)dx\le C(\e+\delta)\|\p_1\ac\phi\|_{L^p}^p.\\
\end{aligned}
\end{align}
Similar to $I_2$ in \eqref{l4.16}, by making use of H{\"o}lder inequality and  \eqref{tde} in lemma \ref{tde1}, we get 
\begin{align}
\begin{aligned}
J_2:&=O(1)\int_{\Omega}\left(\ac U\p_1\mr\phi+\ac U\p_1\ac\phi+\ac\phi\p_1\mr\phi+\mr\phi\p_1\mr\phi+\ac\phi\p_1\ac\phi\right)\p_i\left(|\p_1\ac\phi|^{p-2}\p_1\ac\phi\right)dx\\
&\le C\nu\left(\left\|\nabla\left(|\p_1\ac\phi|^{\frac{p}{2}}\right)\right\|^2+\|\p_1\ac\phi\|^p_{L^p}\right)+C\e\left(\|\p_1\mr\phi\|_{L^p}^p+\|\p_1\mr\phi\|_{L^{2p}}^p+\|\p_1\ac\phi\|_{L^p}^p\right),
\end{aligned}
\end{align}
where 
\begin{align}
\begin{aligned}
\int_\Omega\ac\phi\p_1\mr\phi|\p_1\ac\phi|^{p-2}\p_{i1}\ac\phi dx&\le C\|\ac\phi\|_{L^{2p}}\|\p_1\mr\phi\|_{L^{2p}}\|\p_1\ac\phi\|_{L^p}^{\frac{p-2}{2}}\left\|\nabla\left(|\p_1\ac\phi|^{\frac{p}{2}}\right)\right\|\\
&\le \nu\left(\left\|\nabla\left(|\p_1\ac\phi|^{\frac{p}{2}}\right)\right\|^2+\|\p_1\ac\phi\|_{L^p}^p\right)+C\|\ac\phi\|_{L^{2p}}^p\|\p_1\mr\phi\|_{L^{2p}}^p.
\end{aligned}
\end{align}

Because of the property of viscous shocks in lemma \ref{shock}, the estimate of $J_3$ is easier, 
\begin{align}
\begin{aligned}
J_3:&=O(1)\int_{\Omega}\left[\left(\ac\phi+\mr\phi+\mr\phi\ac\phi\right)\p_1\ac U+\left(\ac\phi+\mr\phi\ac U+\mr\phi\ac\phi\right)\p_1\mr U\right]\p_i\left(|\p_1\ac\phi|^{p-2}\p_1\ac\phi\right)dx\\
&\le C(\nu+\e+\delta)\left(\left\|\nabla\left(|\p_1\ac\phi|^{\frac{p}{2}}\right)\right\|^2+\|\p_1\ac\phi\|^p_{L^p}+\|\mr\phi\|_{L^p}^p\right)+C\|\ac\phi\|_{L^{p}}^p,
\end{aligned}
\end{align}
where 
\begin{align}
\begin{aligned}
\int_\Omega&\left(\ac\phi+\mr\phi\ac U\right)|\p_1\ac\phi|^{p-2}\p_{i1}\ac\phi dx\\
&\le \nu\left(\left\|\nabla\left(|\p_1\ac\phi|^{\frac{p}{2}}\right)\right\|^2+\|\p_1\ac\phi\|^p_{L^p}\right)+C\|\ac\phi\|_{L^{p}}^p+C(\e+\nu)\|\mr\phi\|_{L^p}^p.
\end{aligned}
\end{align}
Therefore, it yields
\begin{align}\label{nabla2}
\begin{aligned}
			\frac{d}{dt}\norm{\p_1\ac{\phi}}_{L^p}^p+\left\|\nabla\left(|\p_1\ac{\phi}|^{\frac{p}{2}}\right)\right\|^2\le C(\nu+\delta+\e_0)\|\p_1\ac\phi\|_{L^p}^p+C\e_0 e^{-\bar ct}.
\end{aligned}
\end{align}
In order to get \eqref{l4.001} for $k=1$, we only replace $\ac\phi$ with $\p_1\ac\phi$ and then follow the proof steps in Lemma \ref{tde1}.
\end{proof}

Now we begin to use the $L^p$ method 
 to study the decay rate for the antiderivative $\Phi(x_1,t)$ in \eqref{zm}.
       \begin{Prop}\label{1pp}
        Under the conditions of Theorem \ref{mt}, it holds that, for $2< p< \infty$,
        	\begin{align}\label{lp}
        	\|\Phi\|_{L^p}\leq Cp^\frac14\varepsilon_0(1+t)^{-\frac{p-2}{4p}},
        	\end{align}
	where $C$ is independent of $p$.
       \end{Prop}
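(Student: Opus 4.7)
The plan is to multiply the antiderivative equation \eqref{zm} by $|\Phi|^{p-2}\Phi$, integrate over $\R$, and derive a differential inequality for $y(t):=\|\Phi\|_{L^p}^p$ of the form
\begin{align*}
\frac{dy}{dt}+c_p\|\p_1(|\Phi|^{p/2})\|_{L^2}^2\le \text{(absorbable terms)}+C\e_0^{\,p}e^{-\bar ct},
\end{align*}
then close it as an autonomous ODE for $y$ by bounding the dissipation from below with a one-dimensional Nash/Gagliardo--Nirenberg inequality and the uniform bound $\|\Phi\|_{L^2}\le C\e_0$ supplied by \cref{GE}. The claimed $p^{1/4}$ prefactor will emerge from carefully tracking how the constants in both the G--N estimate and the ODE comparison depend on $p$.

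First, I expand the nonlinear source by Taylor's theorem,
\begin{align*}
\Do\bigl(f_1(U+\phi)-f_1(U)\bigr)=f_1'(U)\p_1\Phi+\tfrac{1}{2} f_1''(U)(\p_1\Phi)^2+\tfrac{1}{2} f_1''(U)\Do(\ac\phi^{\,2})+O(|\phi|^3),
\end{align*}
so after cancellation with the $f_1'(U)\p_1\Phi$ term already present in \eqref{zm}, the effective right-hand side is purely quadratic in $\p_1\Phi$ and $\ac\phi$, plus harmless cubic remainders. On the left-hand side, the only ``bad'' contribution is the transport integral
\begin{align*}
\int_{\R} f_1'(U)\,\p_1\Phi\cdot|\Phi|^{p-2}\Phi\,dx_1=-\tfrac{1}{p}\int_{\R} f_1''(U)\,U'\,|\Phi|^p\,dx_1;
\end{align*}
since $f_1''\ge c_0$ and $U'$ is concentrated in a layer of total mass $\delta$ by \cref{shock}, I would dominate it via the one-dimensional Sobolev embedding $\||\Phi|^p\|_{L^\infty}\le C\|\Phi\|_{L^p}^{p/2}\|\p_1(|\Phi|^{p/2})\|_{L^2}$ and absorb the result into the diffusive term $\frac{4(p-1)}{p^2}\|\p_1(|\Phi|^{p/2})\|_{L^2}^2$, provided $\delta$ is small independent of $p$.

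Next I handle the remaining quadratic sources. The $(\p_1\Phi)^2|\Phi|^{p-1}$ contribution is bounded by $C\|\Phi\|_{L^\infty}\cdot\frac{4}{p^2}\|\p_1(|\Phi|^{p/2})\|_{L^2}^2$ and absorbed using the smallness in \eqref{apa}; the non-zero mode contribution is controlled via H\"older, Jensen on $\Torus^{n-1}$, and \cref{pet1}:
\begin{align*}
\int_{\R}\Do(\ac\phi^{\,2})\,|\Phi|^{p-1}\,dx_1\le \|\ac\phi\|_{L^{2p}(\Omega)}^{2}\|\Phi\|_{L^p}^{p-1}\le C\e_0^{\,2}e^{-2\bar ct}\|\Phi\|_{L^p}^{p-1},
\end{align*}
which after Young becomes a purely exponentially decaying remainder plus a small multiple of $y$. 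Combining all these steps yields
\begin{align*}
\frac{d}{dt}\|\Phi\|_{L^p}^p+c\,\|\p_1(|\Phi|^{p/2})\|_{L^2}^2\le C\e_0^{\,p}e^{-\bar ct}.
\end{align*}

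Finally, I apply the one-dimensional Nash-type inequality $\|g\|_{L^2}^6\le C\|g'\|_{L^2}^2\|g\|_{L^1}^4$ to $g=|\Phi|^{p/2}$ and interpolate $\|\Phi\|_{L^{p/2}}$ between $\|\Phi\|_{L^2}$ and $\|\Phi\|_{L^p}$ to obtain
\begin{align*}
\|\p_1(|\Phi|^{p/2})\|_{L^2}^2\ge C\,\|\Phi\|_{L^2}^{-4p/(p-2)}\|\Phi\|_{L^p}^{p(p+2)/(p-2)}\ge C\e_0^{-4p/(p-2)}\,y^{\,1+4/(p-2)},
\end{align*}
using $\|\Phi\|_{L^2}\le C\e_0$. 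Substituting back gives $y'+c\,\e_0^{-4p/(p-2)}y^{1+4/(p-2)}\le C\e_0^{\,p}e^{-\bar ct}$, and the standard Bihari/ODE comparison (exponent $\alpha-1=4/(p-2)$) produces $y(t)\le C_p\e_0^{\,p}(1+t)^{-(p-2)/4}$ for $t\gg1$. Taking the $p$-th root yields the stated bound. The main obstacle is the careful bookkeeping of $p$-dependence throughout: the diffusive coefficient behaves like $(p-1)/p$, while the G--N/Nash constant and the decay exponent $4/(p-2)$ both enter the ODE constant, and one must check that after extracting the $p$-th root only the clean factor $p^{1/4}$ survives. One must also make sure that $\delta$, being independent of $p$, is small enough to beat the $p$ that Young's inequality introduces when absorbing the localized ``bad'' transport term; this is where the hypothesis $\delta\le\delta_0$ of \cref{mt} is essential.
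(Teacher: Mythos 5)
There is a genuine gap at the very first step: your treatment of the ``bad'' transport term does not close. After integrating by parts you correctly obtain $-\tfrac1p\int_\R f_1''(U)U'|\Phi|^p\,dx_1$ on the left (equivalently $+\tfrac1p\int f_1''(U)U'|\Phi|^p\,dx_1$ as a source), and you propose to bound it by $\tfrac{C\delta}{p}\|\Phi\|_{L^p}^{p/2}\|\p_1(|\Phi|^{p/2})\|_{L^2}$ and ``absorb it into the diffusive term.'' But the dissipation enters quadratically and this term only linearly in $\|\p_1(|\Phi|^{p/2})\|_{L^2}$, so Young's inequality inevitably leaves a remainder of size $C\delta^2 p^{-1}\|\Phi\|_{L^p}^p$ that is \emph{not} controlled by the dissipation: there is no Poincar\'e inequality on $\R$, and your own lower bound for the dissipation is superlinear, $\|\p_1(|\Phi|^{p/2})\|^2\gtrsim \e_0^{-4p/(p-2)}y^{1+4/(p-2)}$ with $y=\|\Phi\|_{L^p}^p$. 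The resulting comparison ODE $y'\le -a\,y^{1+4/(p-2)}+C\delta^2p^{-1}y+\dots$ has a nonzero stable equilibrium $y_*\sim(\delta^2\e_0^{4p/(p-2)}/p)^{(p-2)/4}$; once $y$ reaches that level the linear growth term balances the dissipation and no algebraic decay follows, no matter how small $\delta$ is. This is exactly the obstruction that the paper removes by multiplying not by $|\Phi|^{p-2}\Phi$ but by $w|\Phi|^{p-2}\Phi$, where $w$ is the Matsumura--Nishihara weight with $C^{-1}<w<C$ and $(hw)''=-2$: the localized term then appears as $-\tfrac1p\int(hw)''|\Phi|^pU'\,dx_1=\tfrac2p\int|\Phi|^pU'\,dx_1\ge0$ on the left, i.e.\ with a \emph{favorable} sign, and nothing needs to be absorbed. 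Without this weight (or some substitute exploiting the convexity of $f_1$ and the compressivity of the shock), your differential inequality $\frac{d}{dt}\|\Phi\|_{L^p}^p+c\|\p_1(|\Phi|^{p/2})\|^2\le C\e_0^pe^{-\bar ct}$ is not established, and everything downstream collapses.

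The remainder of your plan is a legitimate variant of the paper's argument and would deliver the stated rate if the inequality above were available: the paper closes by multiplying by $(1+\tau)^{\sigma}$ with $\sigma=\frac{p+2}{4}$ and absorbing $\sigma\int(1+\tau)^{\sigma-1}\|\Phi\|_{L^p}^p\,d\tau$ via the same G--N/Young interpolation you use, which is where the $p^{1/4}$ arises; your Nash-plus-Bihari route produces the same exponent and the same $p^{1/4}$ after the $p$-th root. Two smaller points to repair even then: your interpolation of $\|\Phi\|_{L^{p/2}}$ between $L^2$ and $L^p$ requires $p\ge4$ (for $2<p<4$ the exponent $p/2$ lies below $2$ and the needed bound is not available from \cref{GE}), and the source should be $C(\e_0+\delta+\nu)e^{-\bar ct}$ rather than $C\e_0^pe^{-\bar ct}$, though this does not affect the rate.
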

   \begin{proof}
   		As in \cite{MN1994}, we choose the weight function $w(u)$ as 
   	\begin{align}
   		w(u)=\left\{\begin{array}{ll}
   			{-\frac{\left(u-u_{-}\right)\left(u-u_{+}\right)}{h(u)},} & {\left(u_{-}<u<u_{+}\right),}\vspace{1ex} \\
   			{-\frac{u_{\pm}-u_{\mp}}{f^{\prime}\left(u_{\pm}\right)-s},} & {\left(u=u_{\pm}\right).}
   		\end{array}\right.
   	\end{align}
	By 
	\eqref{oc}, there exists a positive constant $C$ such that
	\begin{align*}
	C^{-1}<w<C,\ \ \ \ \ \ \ \ \ \ \ \ (hw)''=-2,
	\end{align*}
   	   	where $(hw)''$ means $\frac{d^2}{dU^2}(h(U)w(U)).$ 
For any $2<
 p<\infty$, multiplying (\ref{zm}) by $w|\Phi|^{p-2}\Phi$, and following the same line as in \cite{MN1994}, see also \cite{M}, we arrive at
   	\begin{align}
   		\frac{d}{d\tau}\int &\frac{1}{p} w|\Phi(\tau)|^{p} dx_1-\int \frac{1}{p}(h w)^{\prime \prime}|\Phi|^{p} U^{\prime} d x_1   +(p-1)\int   w\left|\p_1\Phi\right|^{2}|\Phi|^{p-2} d x_1 \\
		&= \int w |\Phi|^{p-2}\Phi N d x_1, \nonumber
   	\end{align}
	where
   	\begin{align}
   		\begin{aligned}
   		N:=&f_1'(U)\p_1\Phi-(f_1(U+\p_1\Phi)-f_1(U))-\big(\Do(f_1( U+\phi)\\
		&\qquad-f_1( U))-(f_1(U+\p_1\Phi)-f_1(U))\big)\\
   		=&\int_0^1f''_1(U+\theta\mr{\phi})\theta d\theta \mr
		{\phi}^2+O(1)\mathbf{D}_{0}\left(\ac{\phi}\mr{\phi}^2+\ac{\phi}^2\right)\\
		=&:Q(U,\mr{\phi})\mr
		{\phi}^2+O(1)\mathbf{D}_{0}\left(\ac{\phi}\mr{\phi}^2+\ac{\phi}^2\right).
   		   		\end{aligned}
   	\end{align}
Then by $U'>0$, one has
	\begin{align}\label{ineq1}
 	\frac{d}{d\tau}\int w|\Phi(\tau)|^{p} dx_1+\int   w\|\p_1\big(|\Phi|^{\frac{p}{2}}\big)\|^{2}d x_1 \le C(\varepsilon+\delta+\nu)e^{-\bar{c} t}.
 	\end{align}
To get the decay rate \eqref{lp}, 
  multiplying \eqref{ineq1} by $(1+\tau)^\sigma$, and then integrating the resulting equation on $(0,t)$, we get
  \begin{align}\label{lpee}
  	(1+t)^\sigma&\|\Phi(t)\|_{L^p}^p +\int_0^t(1+\tau)^\sigma\left\|\partial_1\left(|\Phi|^{\frac{p}{2}}\right)\right\|^2d\tau\\
	&\leq \|\Phi_0\|_{L^p}^p+\sigma\int_{0}^{t}(1+\tau)^{\sigma-1}\|\Phi(\tau)\|_{L^p}^pdt.\nonumber
  	\end{align}
By the Sobolev's inequality, we have
\begin{align}
\|\Phi\|_{L^p}^p\le \|\Phi\|^2\|\Phi\|_{L^\infty}^{p-2},
\end{align}
\begin{align}
\|\Phi\|_{L^\infty}^p\le 2\|\Phi\|^{\frac{p}{2}}_{L^p}\left\|\p_1\left(|\Phi|^{\frac{p}{2}}\right)\right\|.
\end{align}
Then it yields
\begin{align}
	\begin{aligned}
\|\Phi\|_{L^p}^p\le
2^{\frac{2(p-2)}{p+2}}\|\Phi\|^{\frac{4p}{p+2}}\left\|\p_1(|\Phi|^{\frac{p}{2}})\right\|^\frac{2(p-2)}{p+2},
  \end{aligned}
\end{align}
and from Cauchy's inequality, it holds that 
\begin{align}\label{eqsi}
\begin{aligned}
&(1+t)^\sigma\|\Phi(t)\|_{L^p}^p\\
&\le\frac{p-2}{p+2}(1+t)^\sigma\left\|\partial_1\left(|\Phi|^{\frac{p}{2}}\right)\right\|^2+\frac{4}{p+2}2^\frac{p-2}{2}\sigma^\frac{p+2}{4}(1+t)^{\sigma-\frac{p-2}{4}}\|\Phi\|_{L^2}^{p}.
\end{aligned}
\end{align}
  Choosing $\sigma=\frac{p+2}{4}$, we get \cref{lp}.
      \end{proof}
      \subsection{Decay of $\|\p_1\Phi
      \|=\|\mr{\phi}\|$}
  \begin{Prop}\label{p2}
  	Under the conditions of Theorem \ref{mt}, it holds that, 
  	\begin{align}\label{rate2}
  		\|\mr{\phi}\|_{H^1(\Omega)}\leq Cp^\frac18\e_0(1+t)^{-\frac{p-2}{8p}}.
  	\end{align}
  \end{Prop}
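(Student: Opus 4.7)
The strategy is to apply the area inequality (\cref{aii}) with $f(t):=\|\mr\phi(\cdot,t)\|^2=\|\p_1\Phi(\cdot,t)\|^2$ to obtain the $L^2$ half of $\|\mr\phi\|_{H^1}$, then to run the analogous argument on the $\p_1$-differentiated zero-mode equation for $\|\p_1\mr\phi\|$, thereby closing \eqref{rate2}.

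For the integral hypothesis \eqref{lemma2}, I would take $p=2$ in \eqref{ineq1}, retain the $\int\Phi^2U'\,dx_1$ dissipation that was dropped to get the clean statement there, and integrate in time. The $\ac\phi$-generated contributions in the nonlinear term $N$ are exponentially small thanks to \cref{pet1}, so we arrive at $\|\Phi(t)\|^2 + c\int_0^t\|\mr\phi(\tau)\|^2\,d\tau \le C\varepsilon_0^2$; hence \eqref{lemma2} holds with $\beta=\gamma=0$ and $C_1=C\varepsilon_0^2$.

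For the derivative hypothesis \eqref{lemma1}, I would multiply \eqref{zm0} by $\mr\phi$, integrate over $\R$, and use the Taylor expansion
\begin{align*}
\Do\bigl(f_1(U+\phi)-f_1(U)\bigr)=f_1'(U)\mr\phi+\tfrac12 f_1''(U)\mr\phi^2+\tfrac12 f_1''(U)\Do(\ac\phi^2)+O(\phi^3).
\end{align*}
Integration by parts on the leading linear term produces the damping $\int f_1''(U)U'\mr\phi^2\,dx_1\ge 0$. The cubic $\mr\phi^3$-term is absorbed into this damping after a further integration by parts, using the 1D Sobolev bound $\|\mr\phi\|_{L^\infty}\le C\|\mr\phi\|_{H^1}\le C\varepsilon_0$ supplied by \cref{GE}. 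The $\Do(\ac\phi^2)$-type contributions are $\le Ce^{-\bar c t}$ via \cref{pet1}. The remaining residual is estimated via H\"older against the $L^p$-decay \eqref{lp} as $\le C\|\Phi\|_{L^p}^2\le Cp^{1/2}\varepsilon_0^2(1+t)^{-(p-2)/(2p)}$, so altogether $f'(t)\le Cp^{1/2}\varepsilon_0^2(1+t)^{-(p-2)/(2p)} + Ce^{-\bar c t}$; this is \eqref{lemma1} with $\alpha=(p-2)/(2p)$ and $C_0=Cp^{1/2}\varepsilon_0^2$.

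Since $0\le\beta<\alpha$ and $\alpha+\beta<2$ for $p>2$, \cref{aii} gives $f(t)\le 2\sqrt{C_0C_1}\,(1+t)^{-\alpha/2}\le Cp^{1/4}\varepsilon_0^2(1+t)^{-(p-2)/(4p)}$, i.e.\ $\|\mr\phi\|\le Cp^{1/8}\varepsilon_0(1+t)^{-(p-2)/(8p)}$. The $\|\p_1\mr\phi\|$ piece follows from the same triple (integral bound, derivative bound, area inequality) applied to the $\p_1$-differentiated zero-mode equation, the integral hypothesis $\int_0^t\|\p_1\mr\phi\|^2\,d\tau\le C\varepsilon_0^2$ being supplied by the $\int_0^t\|\Phi\|_{H^3}^2\,d\tau\le C\varepsilon_0^2$ control in \eqref{GE1}. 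The main obstacle is the derivative estimate: the quadratic $\mr\phi^2$ in the nonlinear expansion has to be split so that its absorbable part is swallowed by the damping while the leftover is bounded by the slowly-decaying $\|\Phi\|_{L^p}^2$, rather than by $\|\mr\phi\|^2$ itself, which would close back circularly.
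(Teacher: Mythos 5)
Your proposal follows essentially the same route as the paper: the area inequality \cref{aii} is applied to $\|\mr\phi\|^2$ and then to $\|\p_1\mr\phi\|^2$, with the integral hypotheses supplied by the integrated energy/$L^2$ estimates and the derivative hypotheses obtained from the zero-mode equation by trading the residual for $\|\Phi\|_{L^p}^2\le Cp^{1/2}\e_0^2(1+t)^{-\frac{p-2}{2p}}$. The only cosmetic difference is that what you call ``H\"older against the $L^p$-decay'' is carried out in the paper as the G-N interpolation $\|\mr\phi\|_{L^\infty}^2\le C\|\p_1\mr\phi\|^{\frac{4(p+1)}{3p+2}}\|\Phi\|_{L^p}^{\frac{2p}{3p+2}}$ followed by Young's inequality, which is exactly the damping/leftover split you describe in your final paragraph.
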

     \begin{proof}
	  Multiplying \eqref{zm0} 
	   by $\mr\phi$ and then integrating the resulting equation on $\mathbb{R}$ with respect to $x_1$, we have
	  \begin{align}\label{3.17}
	     \frac{1}{2}\frac{d}{dt}\|\mr{\phi}\|^2+\int_{\mathbb{R}}\p_1\left[f'_1(U)\mr{\phi}\right]\mr{\phi} dx_1+ \|\p_1\mr{\phi}\|^2=\int_{\mathbb{R}}\p_1N(x_1,t)\mr{\phi} dx_1.
	  \end{align}
	  Then we have
	  \begin{align}\label{N}
	  \begin{aligned}
	  	&\abs{\int_{\mathbb{R}}\p_1N(x_1,t)\mr{\phi} dx_1}\\
		&\le\abs{\int_{\mathbb{R}}Q(U,\mr{\phi})\p_1\left(\frac{2\mr{\phi}^3}{3}\right)+\p_1Q(U,\mr{\phi})\mr{\phi}^3dx_1}+\abs{O(1)\int_{\R}\p_1\mr{\phi}\Do(\ac{\phi}\mr{\phi}+\ac{\phi}^2)dx_1}\\
	  	&=\abs{\frac{1}{3}\int_{\mathbb{R}}\left[Q_{U}U'+Q_{\phi}\p_1\mr{\phi}\right]\mr{\phi}^3dx_1}+O(\varepsilon+\delta+\nu)\left(e^{-\bar{c}t}+\norm{\p_1\mr{\phi}}^2_{L^2}\right)\\
	 	&\le C \|\mr{\phi}\|^2_{L^\infty}+C(\varepsilon+\delta+\nu)\left(\|\p_1\mr{\phi}\|^2+e^{-\bar{c}t}\right),
		\end{aligned}
		\end{align}
		\begin{align}
	   \left|\int_{\mathbb{R}}\p_1\left[f'_1(U)\mr{\phi}\right]\mr{\phi} dx_1\right|=&\left|\int_{\mathbb{R}}f'_1(U)\p_1\left(\frac{\mr{\phi}^2}{2}\right)dx_1\right|\le C\int_{\mathbb{R}}U'\mr{\phi}^2dx_1\leq C \|\mr{\phi}\|^2_{L^\infty}.
	   \end{align}
	  By the G-N 
	   inequality \cref{GN2}, we have 
	  \begin{align}\label{3.22}
	  \|\mr\phi\|^2_{L^\infty}=\|\p_1\Phi\|_{L^\infty}^2\leq C\|\p_1\mr{\phi}\|^{\frac{4(p+1)}{3p+2}}\|\Phi\|_{L^p}^{\frac{2p}{3p+2}}\leq  4\|\p_1\mr{\phi}\|^{2}+C\|\Phi\|_{L^p}^{2}.
	  \end{align}
From  \eqref{3.17}-\eqref{3.22},	 we get 
	 \begin{align}\label{2l}
	 	\frac{d}{dt}\|\mr{\phi}\|^2\leq Cp^\frac12\e^2(1+t)^{-\frac{p-2}{2p}}.
	 \end{align}
	 Due to the proposition \ref{1pp}, we know
	 \begin{align}
	 \int_0^\infty\|\mr{\phi}\|^2(\tau)d\tau\le C\e_0^2.
	 \end{align}
Then we conclude from the area inequality, i.e., Lemma \ref{aii}, that 
	 \begin{align}\label{2d}
	 	\|\mr{\phi}\|^2\leq Cp^\frac14\e_0^2(1+t)^{-\frac{p-2}{4p}}.
	 \end{align}
	
	Now we are ready to estimate $\|\p_1\mr\phi
	\|$. Multiplying \eqref{zm0} 
	   by $-\p_1^2\mr\phi
	   $ and then integrating the resulting equation on $\mathbb{R}$ with respect to $x_1$, we have
	  \begin{align}\label{3.17.}
	     \frac{1}{2}\frac{d}{dt}\|\p_1\mr{\phi}
	     \|^2+ \|\p^2_1\mr{\phi}\|^2=\int_{\mathbb{R}}\p_1\left[f'_1(U)\mr{\phi}\right]\p_1^2\mr{\phi}
	      dx_1-\int_{\mathbb{R}}\p_1N(x_1,t)\p_1^2\mr{\phi}
	      dx_1.
	  \end{align}
	  By a direct computation, we have
	  \begin{align}
	  \begin{aligned}
	  \left|\int_{\mathbb{R}}\p_1\left[f'_1(U)\mr{\phi}\right]\p_1^2\mr{\phi}
	   dx_1\right|&\le\frac{1}{8}\|\p_1^2\mr{\phi}
	   \|^2+C\left(\|\mr\phi\|_{L^\infty}^2+\|\p_1\mr\phi
	   \|_{L^\infty}^2\right)\\
	  &\le\frac{1}{4}\|\p_1^2\mr{\phi}
	  \|^2+Cp^{\frac{1}{2}}\e_0^2(1+t)^{-\frac{p-2}{2p}},
	  \end{aligned}
	   \end{align}
where in fact that
	\begin{align}
	\|\mr\phi\|_{L^\infty}^2\le C\|\p_1^2\mr\phi
	\|^{\frac{4(p+1)}{5p+2}}\|\Phi\|_{L^p}^{\frac{6p}{5p+2}}\le\frac{1}{16}\|\p_1^2\mr\phi
	\|^2+Cp^{\frac{1}{2}}\e_0^2(1+t)^{-\frac{p-2}{2p}},
	\end{align}
	\begin{align}
	\|\p_1\mr\phi
	\|_{L^\infty}^2\le C\|\p_1^2\mr\phi
	\|^{\frac{4(2p+1)}{5p+2}}\|\Phi\|_{L^p}^{\frac{2p}{5p+2}}\le\frac{1}{16}\|\p_1^2\mr\phi
	\|^2+Cp^{\frac{1}{2}}\e_0^2(1+t)^{-\frac{p-2}{2p}}.
	\end{align}
	The last term on the right-hand-side of \eqref{3.17.} yields that 
	 \begin{align}\label{N.}
	  \begin{aligned}
	  	\abs{\int_{\mathbb{R}}\p_1N(x_1,t)\p_1^2\mr{\phi}
		 dx_1}&\le\frac{1}{8}\|\p_1^2\mr\phi
		 \|^2+\int_{\mathbb{R}}|\p_1N|
		 ^2(x_1,t)dx_1\\
		&\le\frac{1}{8}\|\p_1^2\mr\phi
		\|^2+Cp^{\frac{1}{2}}\e_0^2(1+t)^{-\frac{p-2}{2p}}.
		\end{aligned}
		\end{align}
	Thus we have
	\begin{align}\label{L.}
	\frac{d}{dt}\|\p_1\mr{\phi}
	\|^2\leq Cp^{\frac{1}{2}}\e_0^2(1+t)^{-\frac{p-2}{2p}},
	 \end{align}
	 and $\|\p_1\mr\phi
	 \|^2\in L^1(0,+\infty)$ by Theorem \ref{GE}. Then using the area inequality again, one gets
	 \begin{align}
	 \|\p_1\mr{\phi}
	 \|^2\leq Cp^{\frac{1}{4}}\e_0^2(1+t)^{-\frac{p-2}{4p}}.
	 \end{align}
	 Finally, we obtain the decay rate \eqref{rate2}.
	 \end{proof}




\begin{proof}[\bf Proof of Theorem \ref{mt}]
It remains to show \eqref{d2}, which can be achieved from the G-N inequality and the decay rate \eqref{rate2}, i.e., 
\begin{align*}
\|\mr{\phi}\|_{L^\infty}\le C\|\Phi\|^\frac{p}{3p+2}_{L^p}\|\p_1\mr{\phi}\|^\frac{2(p+1)}{3p+2}\le  C\e_0
p^\frac{1}{6}(1+t)^{-\frac{(p-2)(2p+1)}{4p(3p+2)}},
\end{align*}
\begin{align*}
\|\ac{\phi}\|_{L^{\infty}}\le C\sum_{k=0}^{n-1}\|\nabla \ac{\phi}\|_{L^{r_k}(\Omega)}^{\theta_k}\|\ac{\phi}\|_{L^{q_k}(\Omega)}^{1-\theta_k},\qquad\qquad\qquad\qquad\quad
\end{align*}
where $0=(\frac{1}{r_k}-\frac{1}{k+1})\theta_k+\frac{1}{q_k}(1-\theta_k)$ and $\max\{k+1,2\}\leq r_k<+\infty$ and $1\leq q_k<+\infty$ for $k=0,1,...,n-1.$  
It yields that, for $\theta_k>0,$
\begin{align}\label{phint}
\begin{aligned}
\|\ac\phi\|_{L^\infty}\le C\e_0\sum_{k=0}^{n-1}e^{-\bar ct\theta_k}e^{-\bar ct(1-\theta_k)}\left\{(1+t)^\e e^{\bar ct}\right\}^{(\frac{2}{q_k}-1)(1-\theta_k)}\le C \e_0e^{-\bar c\theta_kt}.
\end{aligned}
\end{align}
Then \cref{d2} can be proved by $\norm{\phi}_{L^{\infty}}\leq \|\mr{\phi}\|_{L^{\infty}}+\|\ac{\phi}\|_{L^{\infty}}$.

\end{proof}

\end{document}